\documentclass{article}
\usepackage{authblk}
\usepackage{hyperref}
\usepackage{amsmath, amssymb, amsfonts, amsthm}
\usepackage{graphicx}
\usepackage[utf8]{inputenc}
\usepackage{float}
\usepackage{xcolor}
\usepackage{tikz}
\usetikzlibrary{calc}
\usetikzlibrary{arrows}
\usepackage[dvipsnames]{xcolor}

\theoremstyle{plain} % estilo de teoremas e similares

\theoremstyle{definition} % estilo de definições e similares

\theoremstyle{remark} % estilo de observações e similares
\newtheorem*{observacao}{Remark}
\newtheorem{theorem}{Theorem}

\newtheorem{property}{Property}

\theoremstyle{definition}
\newtheorem{definition}{Definition}

\theoremstyle{remark}

\title{Lights Out!\\
A game of combinatorics and linear algebra}

{\author[1]{Tiane Marcarini \thanks{tiane.pinto@ufes.br}} \author[2]{Cássio Vieira Morais \thanks{cassio.morais@ufes.br}}}

\affil[1,2]{Departamento de Matemática  

\smallskip

Universidade Federal do Espírito Santo}

\date{\today}

\begin{document}

\maketitle
%
%\begin{abstract}
%Neste trabalho, estudamos uma variante triangular do jogo ``Lights Out", proposto na Olimpíada Capixaba de Matemática de 2025. Apresentamos uma descrição combinatória do jogo, caracterizamos formalmente suas operações e introduzimos o conceito de núcleo, que determina quais configurações admitem solução e quantas soluções existem. Em seguida, analisamos padrões geométricos que surgem em elementos do núcleo e descrevemos mecanismos de propagação desses padrões para jogos de tamanhos maiores. Por fim, modelamos o problema por meio de sistemas
%lineares sobre o corpo $\mathbb{Z}_2$, obtendo uma matriz associada ao jogo e um critério combinatório para a invertibilidade dessa matriz. Esse critério estabelece que o jogo admite solução para toda configuração se, e somente se, o número de recobrimentos do tabuleiro por peças $1\times 1$ e $2\times 1$ é ímpar.
%\end{abstract}

\begin{abstract}
	In this work, we study a triangular variant of the ``Lights Out" game, proposed in the 2025 Capixaba Mathematics Olympiad. We present a combinatorial description of the game, formally characterize its operations, and introduce the notion of a quiet pattern, which determines which configurations admit a solution and how many solutions they possess. We then analyze the geometry of quiet patterns and describe the propagation mechanisms that generate patterns for larger board sizes. Finally, we model the problem using linear systems over the field $\mathbb{Z}_2$, obtaining a matrix associated with the game and a combinatorial criterion for its invertibility. This criterion shows that the game admits a solution for every configuration if and only if the number of coverings of the triangular board by $1\times 1$ and $2\times 1$ tiles is odd.
\end{abstract}

\section{Introduction}
\label{sec:introducao}

While drafting the problems for the 2025 Espírito Santo Mathematics Olympiad (OCMAT) (\cite{ocmat:site}), Tiane, a co-author of this article, presented an idea that would later become Question 2 of the third phase of Level 3. The problem featured a triangular arrangement of circles in two colors (see Figure \ref{fig:antes}), which could be interacted with by swapping the colors of specific circles. Although the exam explored only the basic aspects of these swapping rules, several other questions naturally emerged as we analyzed them more carefully.

Upon discussing these questions as a team, we quickly realized that the problem possessed a very rich structure. Later, we discovered that similar problems had already been proposed in other contexts. One example is the \emph{Troca-Cor} (Color-Swap) question from OBMEP 2009, as well as the game \emph{Lights Out}, which has analogous rules but is formulated on a square board  $5 \times 5$ (see~\cite{jaap:site, wikipedia-lightout, obmep:site}). Although less known in Brazil, this game was once marketed in the United States as a small handheld console. From a mathematical perspective, generalizing this type of game to arbitrary graphs is a research topic in combinatorics (see~\cite{evolution:2025, nearlycomplete:2025} for recent results). In this sense, the version used in OCMAT, based on a triangular arrangement, can be seen as a variation of the original Lights Out or, equivalently, as a particular case of the general problem.

The game can be stated as follows. Consider a triangular arrangement of circles (buttons) of side length $n$ ; we say that this arrangement defines a game of {size} $n$ . Figures~\ref{fig:antes} and \ref{fig:depois} illustrate cases where $n = 7$ . Each button can be in one of two states: lit/on or unlit/off, represented by the colors white and black, respectively. A {configuration} is the choice of a state for each of the buttons.

We will call the {neighbors} of a button all the circles that are tangent to it. The fundamental action of the game consists of {pressing} a button. When this occurs, the pressed button and all its neighbors swap states: those that were lit become unlit, and those that were unlit turn on. Figures~\ref{fig:antes} and \ref{fig:depois} show the configurations before and after {button~10} is pressed.

\begin{figure}[H]
\centering
\begin{minipage}{.45\textwidth}
  \centering
  \includegraphics[width=.85\linewidth]{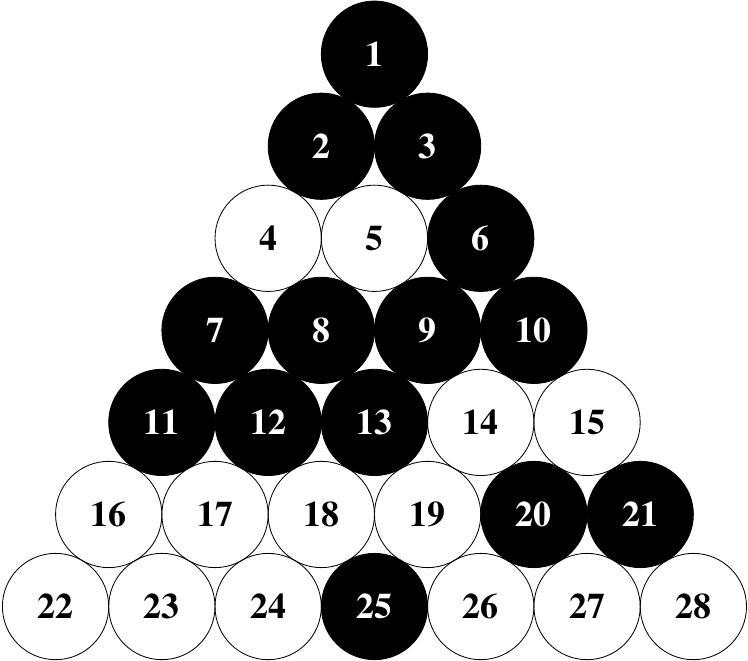}
  \caption{A configuration of the game of size $n=7$.}
  \label{fig:antes}
\end{minipage}%
\hspace{.09\textwidth}
\begin{minipage}{.45\textwidth}
  \centering
  \includegraphics[width=.85\linewidth]{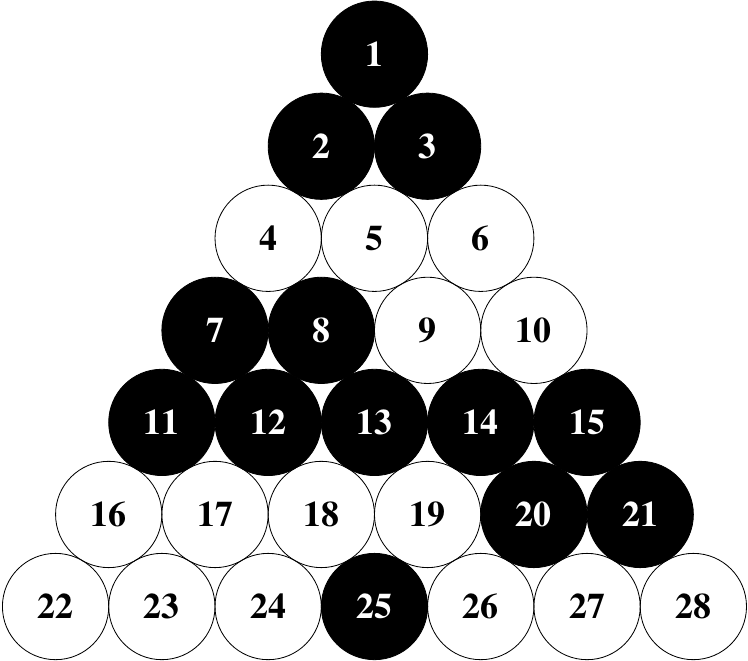}
  \caption{The configuration obtained by pressing button 10 in Figure~\ref{fig:antes}.}
  \label{fig:depois}
\end{minipage}
\end{figure}

The goal of the game is to turn off all the lights. In other words, given an initial configuration, the player must determine a sequence of buttons whose pressing results in the {all-off configuration}, that is, the one where all buttons are turned off. In the example from {Figure~\ref{fig:antes}}, one possible sequence is 1, 10, 12, 20, 22, 23, 25. An interactive version of the game, implemented in GeoGebra, can be found at~\cite{cassio:geogebra}.

This scenario raises several natural questions. For a given configuration, does a solution always exist? When it does, is it necessarily unique? And to what extent do these answers depend on the initial configuration or the value of $n$? Some of these questions will be discussed throughout the article. In Section~\ref{sec:basics}, we present basic properties of the game and an important consequence: for a given $n$, the existence of unsolvable configurations is related to the presence of \emph{null sequences}, that is, non-trivial sequences of buttons that, when pressed, do not change the initial configuration. In Section~\ref{sec:nucleo}, we establish a criterion for determining values of $n$ for which unsolvable configurations exist.

Initially, we the authors approached the problem through the tools of linear algebra, and the results of Section~\ref{sec:basics} can be systematically formalized within that context. However, we chose to present an elementary version there that does not require any prior technical knowledge. With minor adaptations, that section can be reframed in a way that is accessible even to middle school students. In Section~\ref{sec:alg.lin}, we explicitly include the linear algebra approach due to its great effectiveness in analyzing the game. For high school students with a strong algebraic foundation, this perspective can serve as an introduction to topics such as linear systems, matrices and their inverses, and, subsequently, linear algebra itself. For students who have already mastered these topics, the problem offers a natural example of vector spaces over finite fields.

\section{Preliminaries}
\label{sec:basics}

In this section, we will develop some results regarding the existence and quantity of solutions. We begin with a few simple but fundamental observations about how the buttons function.

In a game of size $n$ , there are $\beta=\frac{n(n+1)}2$ buttons, which we will identify by the numbers $1,2,\dots,\beta$ . A first important observation is that the order in which the buttons are pressed does not influence the final result. In fact, for a fixed button $j$, its final state depends only on how many buttons that affect it were pressed, and not on the order of those presses. Furthermore, since each button has only two possible states, pressing the same button an even number of times has no effect at all. We record these observations in the following property.

\begin{property}
The order in which the buttons are pressed is irrelevant, and pressing the same button twice is equivalent to not pressing it at all.
\label{prop:ordem.nao.importa}
\end{property}

Based on this property, the same subset of $\{1,2,\dots,\beta\}$ can be interpreted in distinct ways, depending on the context. These interpretations correspond to different aspects of the game.

\textbf{Interpretation as a {State} of the game.}
A subset can represent a configuration, that is, a choice for the state of each button. In this interpretation, the elements of the subset are precisely the buttons that are lit. For example, the empty set corresponds to the configuration in which all buttons are turned off. We will use the letters $C$ and $D$ to denote configurations.

\textbf{Interpretation as an Action of the game}
A subset can also represent the set of buttons we choose to press. Thanks to Property~\ref{prop:ordem.nao.importa}, any sequence of moves can be described in this way, since the order is irrelevant and repeated presses an even number of times can be discarded. When using this interpretation, we will generally employ the letters  $X$ e $Y$.

\textbf{Interpretation as the Effect of a move}
Finally, a subset can represent the set of buttons whose state was changed when moving from one configuration to another. For example, when moving from the all-off configuration $\emptyset$ to a configuration $C$, exactly the buttons in $C$
 have had their state changed. In general, when moving from a configuration $C$ to a configuration $D$, the altered buttons form the set
\[
C \triangle D = (C \cup D)\setminus(C \cap D),
\]
that is, the symmetric difference between $C$ and $D$. We will not introduce a specific notation for this interpretation.

Using this terminology, we can describe the game as follows. Given an initial configuration $C$ and a set of buttons $X$, pressing the buttons in $X$ produces a certain effect; that is, it alters exactly the buttons of some subset $V$. The goal of the game is to choose $X$ such that this effect coincides with $C$, since, in this case, the initially lit buttons are turned off and we reach the desired final configuration. In this case, we will say that $X$ is a solution for $C$.

We observe that there are $2^\beta$ possible configurations, $2^\beta$ sets of buttons, and $2^\beta$ possible effects. However, not every effect needs to be achievable by a set of buttons. The game of size $n$ admits a solution for every configuration precisely when every possible effect can be obtained from some set of buttons. In this case, a simple count shows that the solution must be unique.

\begin{property}
In a game of size $n$, if every configuration has a solution, then this solution is unique.
\end{property}

To visually represent the evolution of configurations throughout a sequence of moves, we will adopt the notation
$$ C \xrightarrow{X} D $$
where $C$ represents the initial configuration, $D$ the final configuration, and $X$ the set of buttons pressed. In particular, $X$ is a solution for the configuration $C$  when $C \xrightarrow{X}\emptyset$.

Suppose that $C \xrightarrow{X_1} C_1$ and then $C_1 \xrightarrow{X_2} C_2$. Since pressing buttons present simultaneously in $X_1$ and $X_2$ is equivalent to not pressing them, the total effect of these two steps is obtained by pressing only the buttons in $(X_1 \cup X_2) \setminus (X_1 \cap X_2) = X_1 \triangle X_2 $. Thus, $$ C \xrightarrow{X_1} C_1 \xrightarrow{X_2} C_2 \qquad\Longrightarrow\qquad C \xrightarrow{X_1 \triangle X_2} C_2. $$ In other words, the symmetric difference of sets of buttons produces the same effect as the sequence of the corresponding moves.

Let us now turn to analyzing the converse of Property~\ref{prop:nucleo.tem.k}. Suppose that a configuration $C$ admits two distinct solutions. That is, there exist sets of buttons $X_1$ and $X_2$, with $X_1 \neq X_2$, such that $C \xrightarrow{X_1} \emptyset $ e $C \xrightarrow{X_2} \emptyset$. We note that this implies that $\emptyset \xrightarrow{X_2} C$ and from this we obtain $$ C \xrightarrow{X_1} \emptyset \xrightarrow{X_2} C \qquad\Longrightarrow\qquad C \xrightarrow{X_1 \triangle X_2} C. $$ Since $X_1 \neq X_2$, it follows that $X = X_1 \triangle X_2$ is non-empty. Therefore, the effect of pressing exactly the buttons in $X$ is to change no button at all. This type of set plays a central role in the game.

\begin{definition}
A set of buttons $X \subset \{1,2,...,\beta\}$ is a \emph{null sequence} when $C \xrightarrow{X}C$ for every configuration $C$. The set of all null sequences is called the \emph{kernel} of the game.
\end{definition}

We observe that the empty set is always a null sequence, since pressing no button does not change any configuration. Furthermore, if a set $X$ satisfies $C \xrightarrow{X} C$ for some configuration $C$, then the same holds for any configuration. Thus, to verify if $X$ belongs to the kernel, it suffices to check whether $\emptyset \xrightarrow{X} \emptyset$. In other words, it is enough to check if $X$ is a solution for the configuration  $\emptyset$.

As we have seen, the existence of two distinct solutions for the same configuration implies the existence of a non-trivial null sequence. In a sense, distinct solutions for the same configuration always differ by a null sequence. Observing this allows us to describe all solutions for a configuration (that admits one). If the kernel has exactly $k$ elements, say $N_1, \dots, N_k$, and if $X$ is a solution for a configuration $C$, then
$$ X \triangle N_1,\; X \triangle N_2,\; \dots,\; X \triangle N_k $$
are precisely all the solutions for $C$. We will prove this statement below.

\begin{property}
In a game of size $n$, if the kernel has $k$ elements, then every configuration that has a solution has exactly $k$ solutions. In particular, if $k > 1$, there exist configurations with no solution.
\label{prop:nucleo.tem.k}
\end{property}

\begin{proof}
Let $N_1,\dots,N_k$ be the elements of the kernel. Let $C$ be a configuration that admits a solution, and let $X$ such that $C \xrightarrow{X} \emptyset$. Since each $N_j$ belongs to the kernel, we have $\emptyset \xrightarrow{N_j} \emptyset$ and, therefore, $$ C \xrightarrow{X} \emptyset \xrightarrow{N_j} \emptyset \qquad\Longrightarrow\qquad C \xrightarrow{X\triangle N_j} \emptyset. $$ Thus, each $X\triangle N_j$ is a solution for $C$. These solutions are distinct from each other because $$ X\triangle N_i = X\triangle N_j \;\Longrightarrow\; (X\triangle N_i)\triangle X = (X\triangle N_j)\triangle X \;\Longrightarrow\; N_i=N_j. $$

It remains to be shown that there are no other solutions. Let $Y$ be an additional solution for $C$. Then $$ \emptyset \xrightarrow{X} C \xrightarrow{Y} \emptyset \qquad\Longrightarrow\qquad \emptyset \xrightarrow{X\triangle Y} \emptyset. $$
Thus,  $X\triangle Y$ belongs to the kernel, say $X\triangle Y=N_j$, which implies $$ Y = X\triangle (X\triangle Y) = X\triangle N_j. $$
This completes the proof.
\end{proof}

To conclude this section, we determine the kernel for the case of games of size $n=2$. The case $n=1$ is trivial, as there is only one button and pressing it changes any configuration; therefore, the kernel contains only the empty set.

For $n=2$, we have $3$ buttons. We observe that pressing any one of them produces exactly the same effect, and pressing two distinct buttons is equivalent to producing no change at all. Thus, the kernel contains four elements:
$$ \emptyset,\quad \{1,2\},\quad \{1,3\},\quad \{2,3\}. $$
These combinations are represented in Figure~\ref{fig:nucleo.2}.

\begin{figure}[H]
    \centering
    \includegraphics[width=0.22\linewidth]{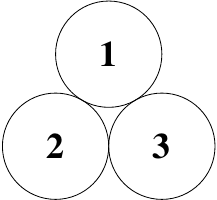}
    \hspace{2mm}
    \includegraphics[width=0.22\linewidth]{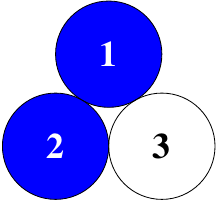}
    \hspace{2mm}
    \includegraphics[width=0.22\linewidth]{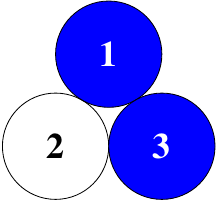}
    \hspace{2mm}
    \includegraphics[width=0.22\linewidth]{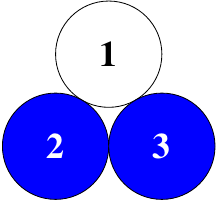}
    \caption{The kernel of a game of size $n = 2$.}
    \label{fig:nucleo.2}
\end{figure}

\section{Kernels}
\label{sec:nucleo}

In the previous section, we defined the kernel. An element of the kernel is a \emph{null sequence}; that is, a set of buttons that, when pressed, do not change the initial configuration. We proved that a game of size $n$ has a unique solution for any configuration when its kernel is trivial. That is, the only element of the kernel is the empty set.

In this section, we will investigate for which values of $n$ the kernel is non-trivial. We will represent the elements of the kernel through figures. For example, Figure \ref{fig:nucleo.exemplo} below represents an element $N$ of the kernel of a game of size  $6$. In this figure, the buttons colored in blue are the elements of $N$. In other words, starting from a configuration $C$, if we press the blue buttons, the final configuration will be $C$.

\begin{figure}[H]
    \centering
    \includegraphics[width=0.25\linewidth]{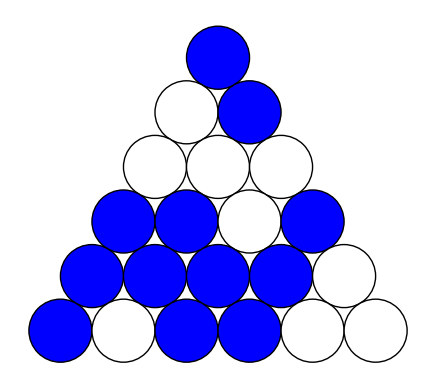}
    \caption{A representation of an element of the kernel of a game of size $6$.}
    \label{fig:nucleo.exemplo}
\end{figure}

To begin, we will verify that the number of elements in the kernel is always a power of 2.

\begin{property}
In a game of size $n$, the kernel has $2^{\ell}$ elements, for same integer $\ell$ such that $0 \leq \ell \leq \dfrac{n(n+1)}{2}$.
\end{property}

\begin{proof}
Given a set of buttons $X$, let $C$ be the set of buttons altered when we press $X$. Considering $C$ as a game configuration, this means that $X$ is a solution for $C$. Thus, every set of buttons is a solution to some configuration.

On the other hand, let $k$ be the number of elements in the kernel. By Property~\ref{prop:nucleo.tem.k}, every solvable configuration admits exactly $k$ solutions. If $m$ denotes the number of solvable configurations, then the total number of solutions is $m \cdot k$. Since each set of buttons is the solution to exactly one configuration, we conclude that the total number of sets of buttons is $m \cdot k$.

The total number of sets of buttons is $2^{\tfrac{n(n+1)}{2}}$, hence $m \cdot k = 2^{\tfrac{n(n+1)}{2}}$.  Therefore, $k$ must be a power of $2$, that is, $k = 2^{\ell}$ for some integer $\ell$ whith $0 \leq \ell \leq \dfrac{n(n+1)}{2}$.
\end{proof}

\begin{observacao}
In the previous property, the interval $0 \le \ell \le \dfrac{n(n+1)}{2}$ can be refined. In fact, it is not difficult to verify that necessarily $\ell \le n$. A simple argument consists of observing that two distinct elements of the kernel must differ in at least one button of the last row of the triangular array, which contains exactly $n$ buttons.
\end{observacao}

{We will say that the kernel has {dimension} $\ell$ when it has exactly $2^{\ell}$ elements.} Table~\ref{tab:nucleos} presents the values of this dimension for $1 \le n \le 80$. For example, for $n=5$, we observe that $\ell=2$, which means that the kernel contains $2^{2}=4$ elements. These elements are represented in Figure~\ref{fig:nucleo.5}

\begin{table}[H]
    \centering
\begin{tabular}{||c|c||c|c||c|c||c|c||c|c||c|c||c|c||c|c||} \hline 
$n$  & $\ell$ & $n$  & $\ell$ & $n$  & $\ell$ & $n$  & $\ell$ & $n$  & $\ell$ & $n$  & $\ell$ & $n$  & $\ell$ & $n$  & $\ell$  \\ \hline 
$1$  & $0$ & $11$ & $0$ & $21$ & $0$ & $31$ & $0$ & $41$ & $0$ & $51$ & $0$ & $61$ & $20$ & $71$ & $6$ \\
$2$  & $2$ & $12$ & $4$ & $22$ & $4$ & $32$ & $0$ & $42$ & $2$ & $52$ & $0$ & $62$ & $42$ & $72$ & $0$ \\
$3$  & $0$ & $13$ & $4$ & $23$ & $0$ & $33$ & $2$ & $43$ & $4$ & $53$ & $0$ & $63$ & $0$  & $73$ & $4$ \\
$4$  & $0$ & $14$ & $10$ & $24$ & $0$ & $34$ & $2$ & $44$ & $0$ & $54$ & $20$ & $64$ & $0$ & $74$ & $2$ \\
$5$  & $2$ & $15$ & $0$ & $25$ & $0$ & $35$ & $0$ & $45$ & $0$ & $55$ & $20$ & $65$ & $0$ & $75$ & $0$ \\
$6$  & $4$ & $16$ & $0$ & $26$ & $10$ & $36$ & $4$ & $46$ & $10$ & $56$ & $18$ & $66$ & $4$ & $76$ & $10$ \\
$7$  & $0$ & $17$ & $0$ & $27$ & $0$ & $37$ & $0$ & $47$ & $2$ & $57$ & $0$ & $67$ & $0$ & $77$ & $0$ \\
$8$  & $0$ & $18$ & $2$ & $28$ & $8$ & $38$ & $4$ & $48$ & $0$ & $58$ & $18$ & $68$ & $4$ & $78$ & $10$ \\
$9$  & $0$ & $19$ & $8$ & $29$ & $10$ & $39$ & $0$ & $49$ & $0$ & $59$ & $0$ & $69$ & $0$ & $79$ & $0$ \\
$10$ & $2$ & $20$ & $0$ & $30$ & $20$ & $40$ & $16$ & $50$ & $2$ & $60$ & $20$ & $70$ & $4$ & $80$ & $0$ \\

 \hline 

\end{tabular}
    \caption{Kernel dimension for games of size 1 to 80.}
    \label{tab:nucleos}
\end{table}

\begin{figure}[h]
    \centering
    \includegraphics[width=0.24\linewidth]{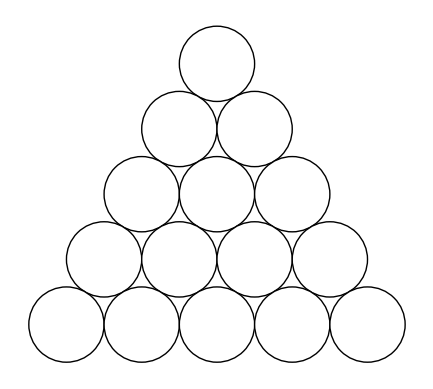}
    \includegraphics[width=0.24\linewidth]{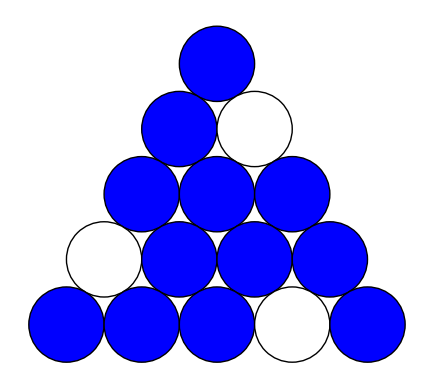}
    \includegraphics[width=0.24\linewidth]{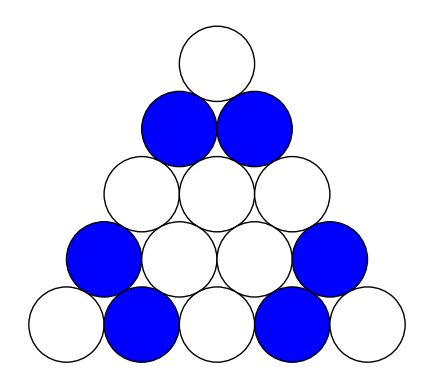}
    \includegraphics[width=0.24\linewidth]{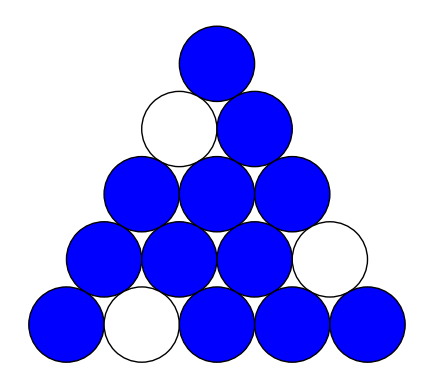}
    \caption{Elements of the kernel of a game of size $n=5$.}
    \label{fig:nucleo.5}
\end{figure}

For $n=22$, we have $\ell=4$. This means that the kernel has $2^{4}=16$ elements, which are represented in Figure~\ref{fig:nucleo.22}. This case is interesting because all elements exhibit clear symmetries. For $n=19$, the kernel has $2^{8}=256$ elements. In Figure~\ref{fig:nucleo.19}, we show some of them. We observe that some present evident symmetry, while others do not exhibit any apparent pattern.

\begin{figure}[H]
    \centering
    \includegraphics[width=0.12\linewidth]{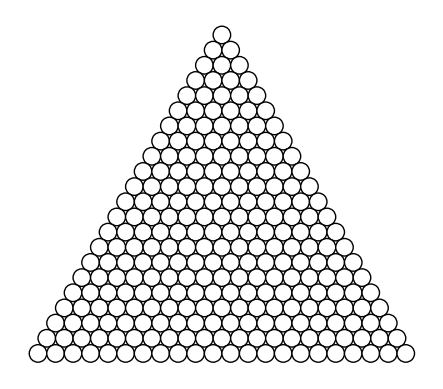}
    \includegraphics[width=0.12\linewidth]{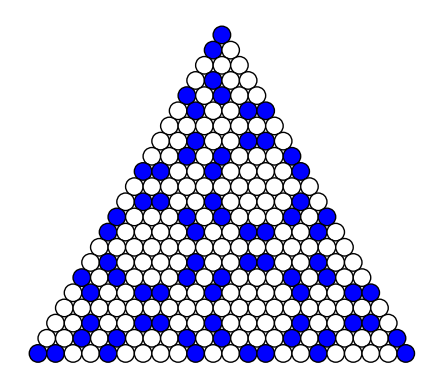}
    \includegraphics[width=0.12\linewidth]{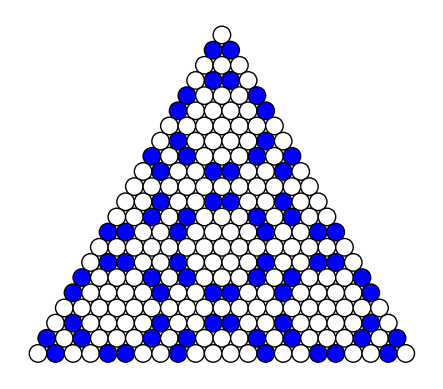}
    \includegraphics[width=0.12\linewidth]{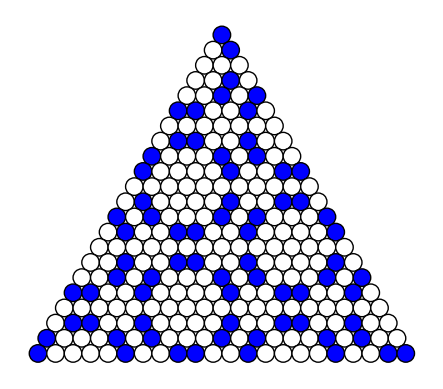}
    \includegraphics[width=0.12\linewidth]{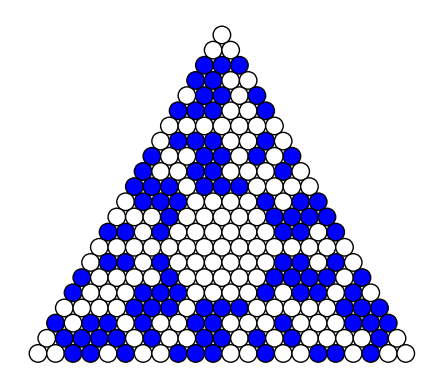}
    \includegraphics[width=0.12\linewidth]{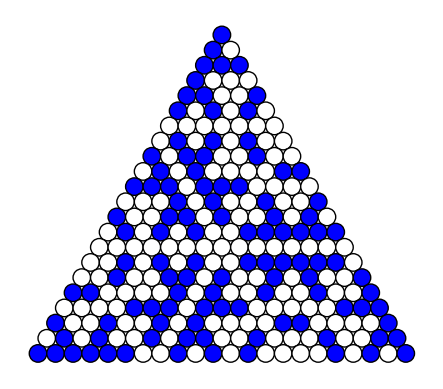}
    \includegraphics[width=0.12\linewidth]{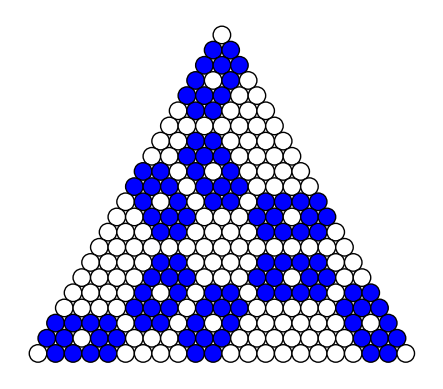}
    \includegraphics[width=0.12\linewidth]{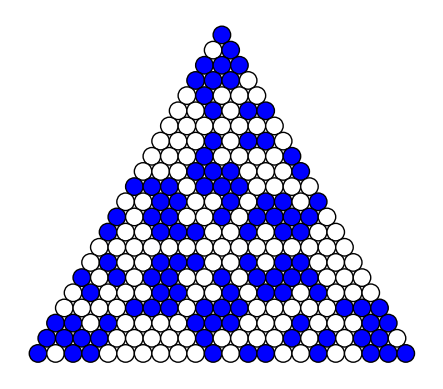}
    \includegraphics[width=0.12\linewidth]{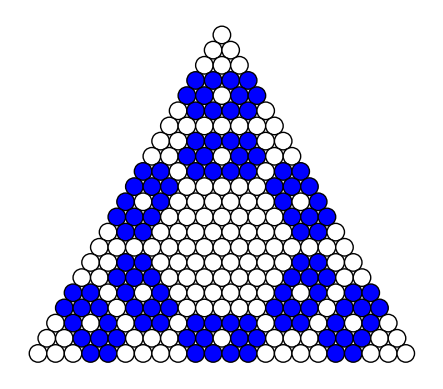}
    \includegraphics[width=0.12\linewidth]{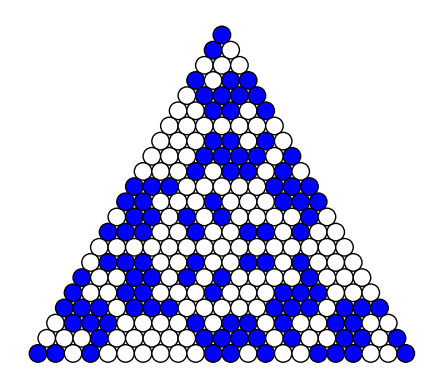}
    \includegraphics[width=0.12\linewidth]{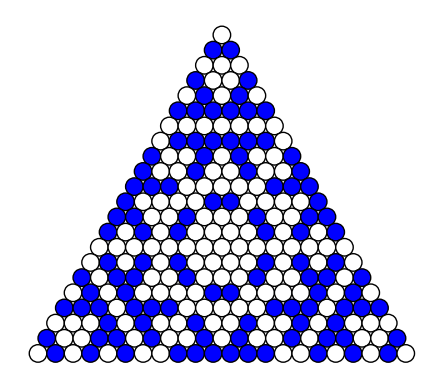}
    \includegraphics[width=0.12\linewidth]{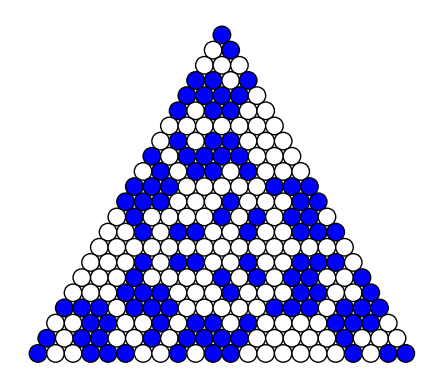}
    \includegraphics[width=0.12\linewidth]{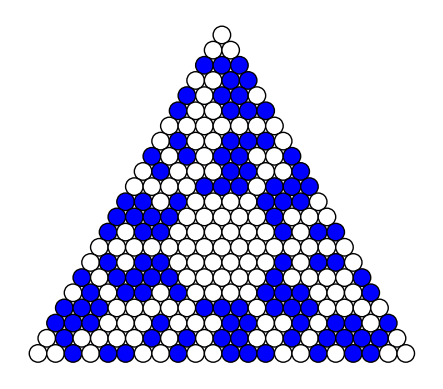}
    \includegraphics[width=0.12\linewidth]{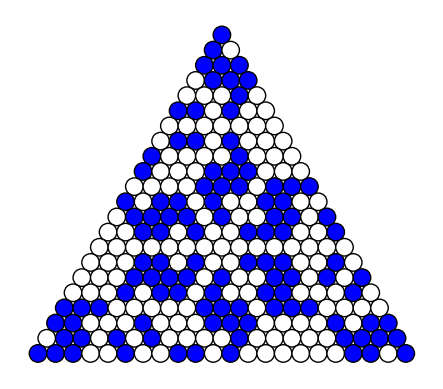}
    \includegraphics[width=0.12\linewidth]{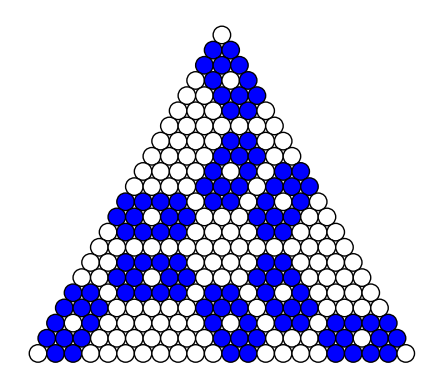}
    \includegraphics[width=0.12\linewidth]{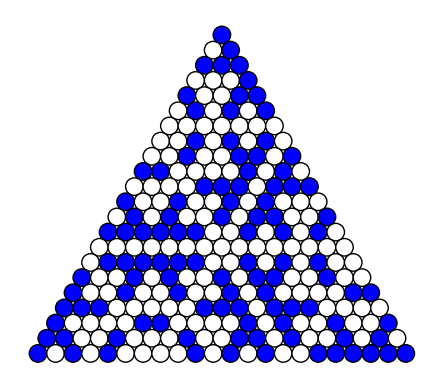}

    \caption{Elements of the kernel of a game of size $n=22$.}
    \label{fig:nucleo.22}
\end{figure}

\begin{figure}[H]
    \centering
    \includegraphics[width=0.12\linewidth]{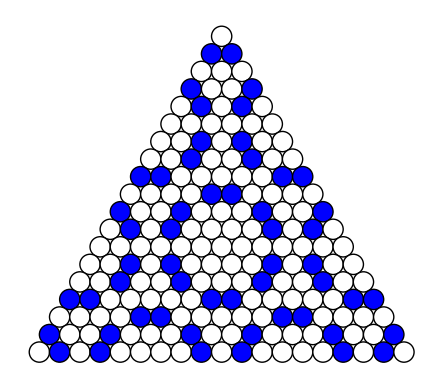}
    \includegraphics[width=0.12\linewidth]{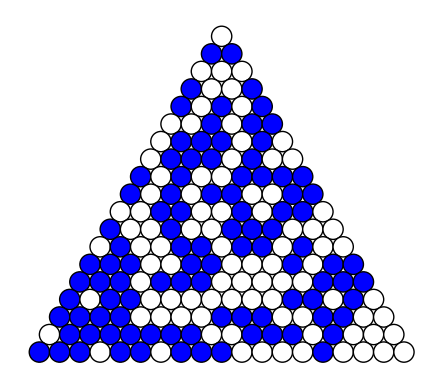}
    \includegraphics[width=0.12\linewidth]{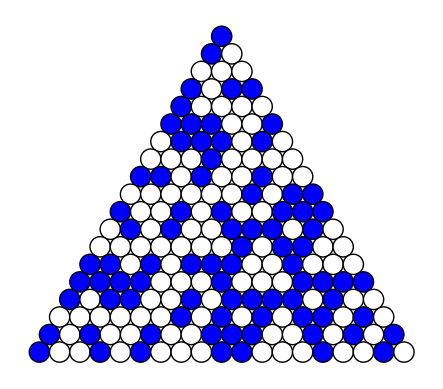}
    \includegraphics[width=0.12\linewidth]{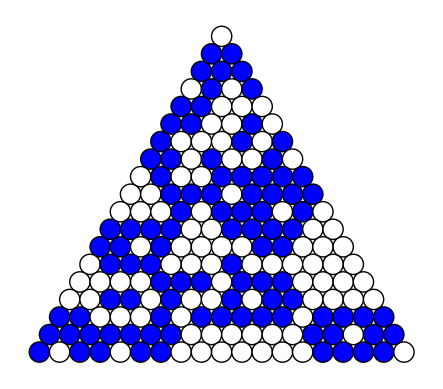}
    \includegraphics[width=0.12\linewidth]{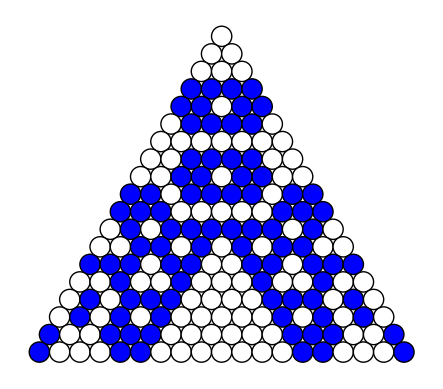}
    \includegraphics[width=0.12\linewidth]{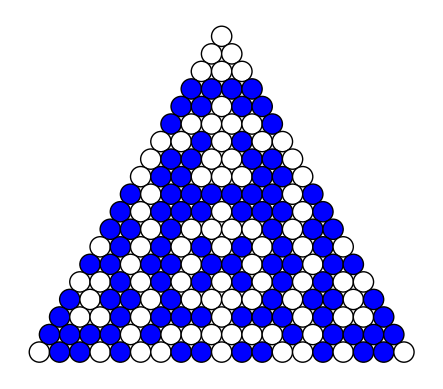}
    \includegraphics[width=0.12\linewidth]{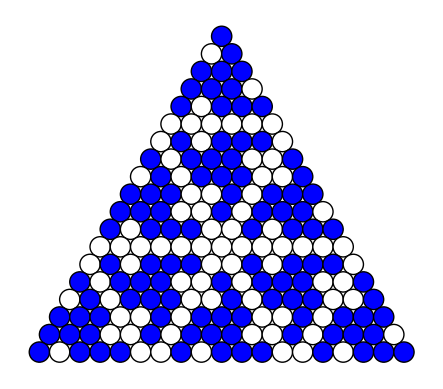}
    \includegraphics[width=0.12\linewidth]{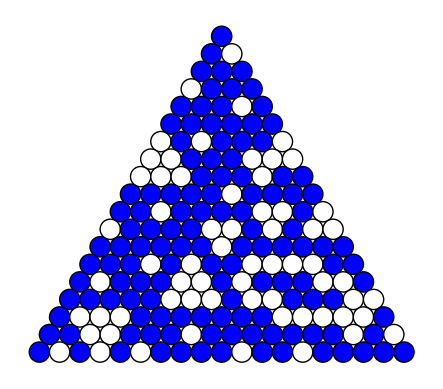}
    \caption{Some elements of the kernel of a game of size $n=19$.}
    \label{fig:nucleo.19}
\end{figure}

Even with few examples, it is possible to notice certain recurring designs in the symmetric figures. Some of these patterns appear in games of different sizes. Figures~\ref{fig:padrao.1} and~\ref{fig:padrao.2} illustrate some of these cases.

\begin{figure}[H]
    \centering
    \includegraphics[width=0.19\linewidth]{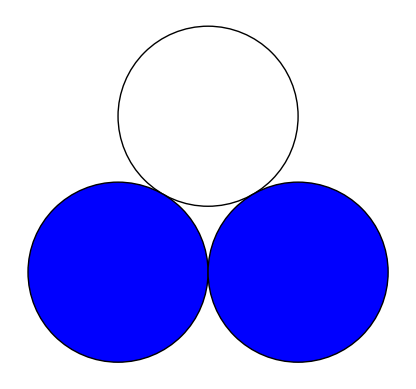}
    \includegraphics[width=0.19\linewidth]{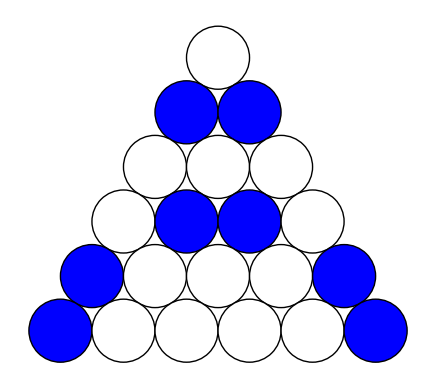}
    \includegraphics[width=0.19\linewidth]{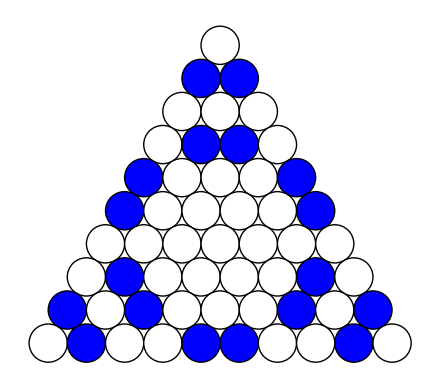}
    \includegraphics[width=0.19\linewidth]{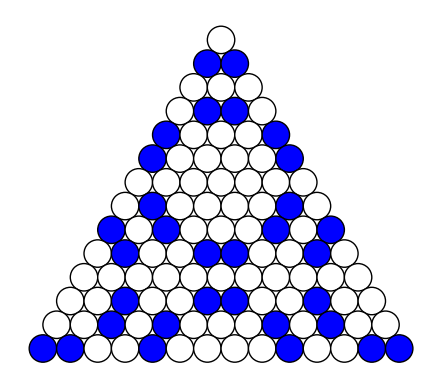}
    \includegraphics[width=0.19\linewidth]{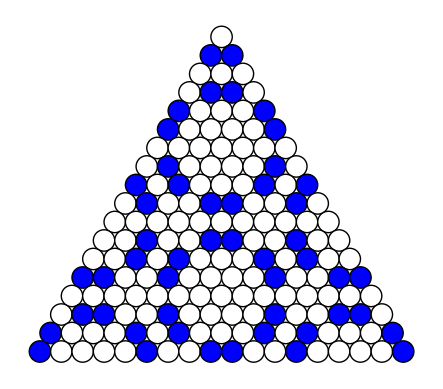}
    \includegraphics[width=0.19\linewidth]{resources/triangulo_22_3.png}
    \includegraphics[width=0.19\linewidth]{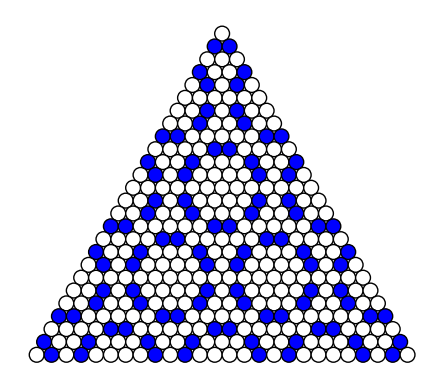}
    \includegraphics[width=0.19\linewidth]{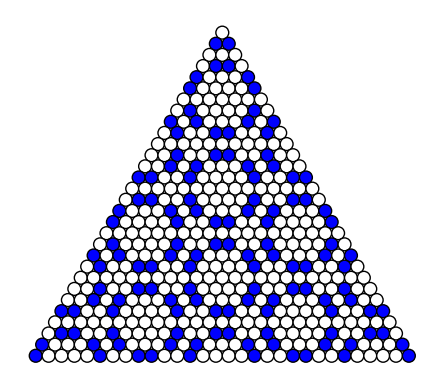}
    \includegraphics[width=0.19\linewidth]{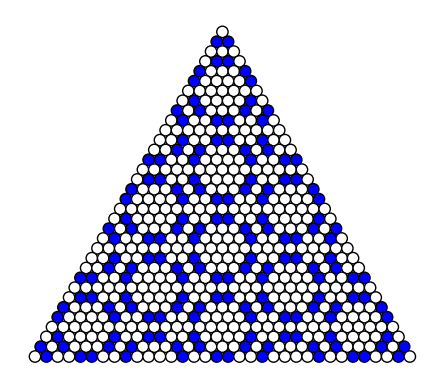}
    \includegraphics[width=0.19\linewidth]{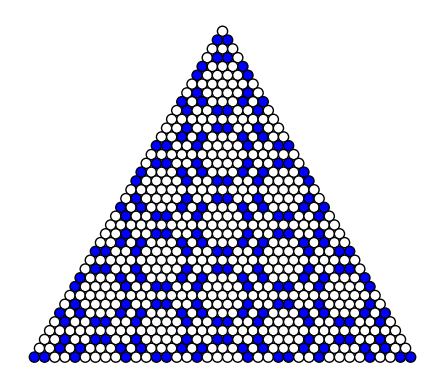}

    \caption{Some patterns.}
    \label{fig:padrao.1}
\end{figure}
\begin{figure}[H]
    \centering
    
    \includegraphics[width=0.19\linewidth]{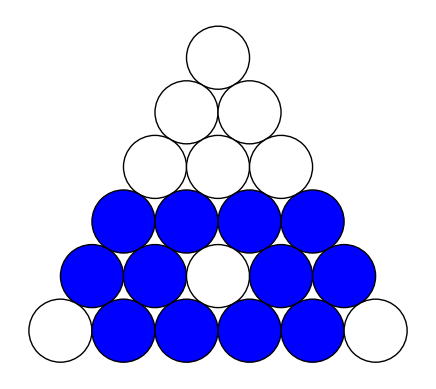}
    \includegraphics[width=0.19\linewidth]{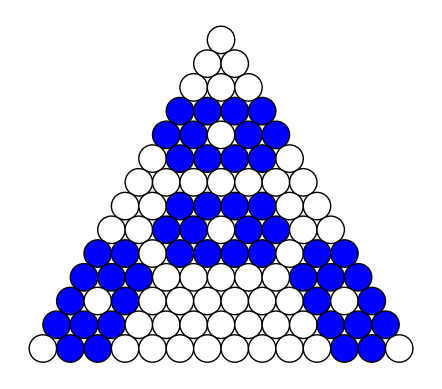}
    \includegraphics[width=0.19\linewidth]{resources/triangulo_22_9.png}
    \includegraphics[width=0.19\linewidth]{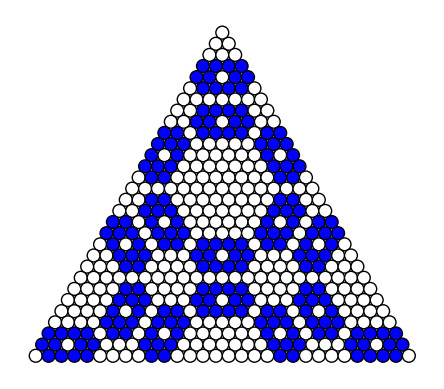}
    \includegraphics[width=0.19\linewidth]{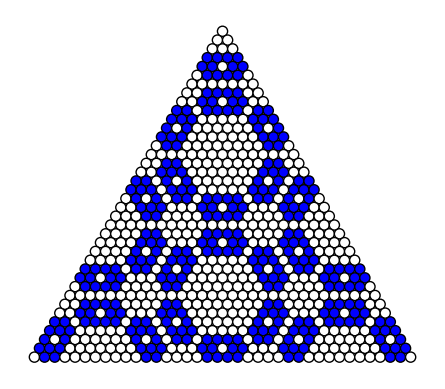}
    
    \includegraphics[width=0.19\linewidth]{resources/triangulo_5_2.png}
    \includegraphics[width=0.19\linewidth]{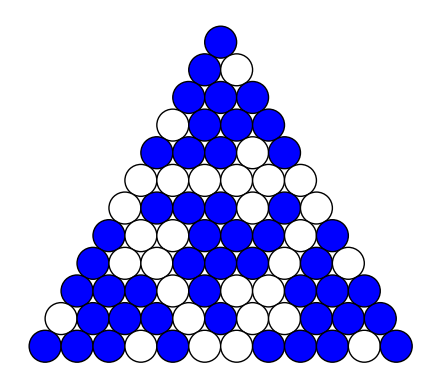}
    \includegraphics[width=0.19\linewidth]{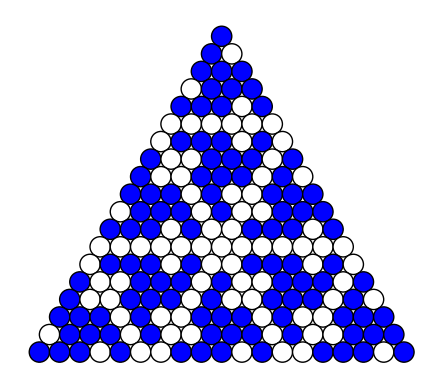}
    \includegraphics[width=0.19\linewidth]{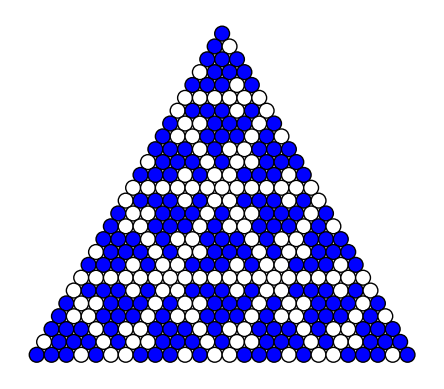}
    \includegraphics[width=0.19\linewidth]{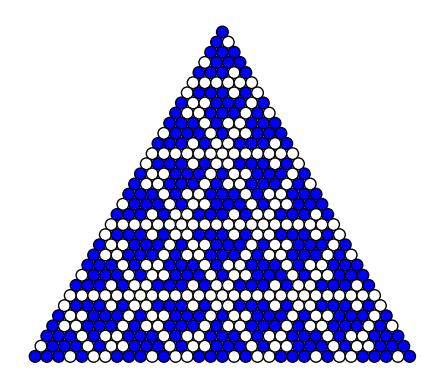}

    \includegraphics[width=0.19\linewidth]{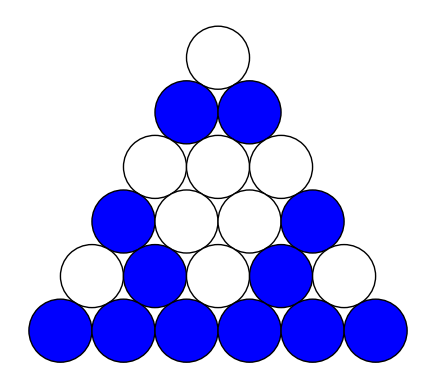}
    \includegraphics[width=0.19\linewidth]{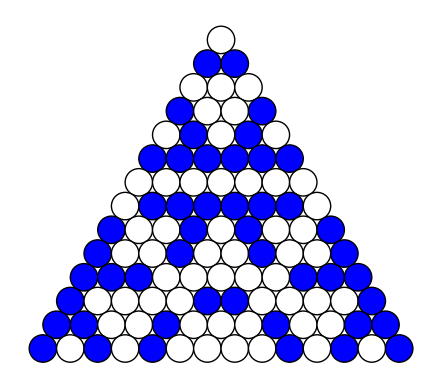}
    \includegraphics[width=0.19\linewidth]{resources/triangulo_22_11.png}
    \includegraphics[width=0.19\linewidth]{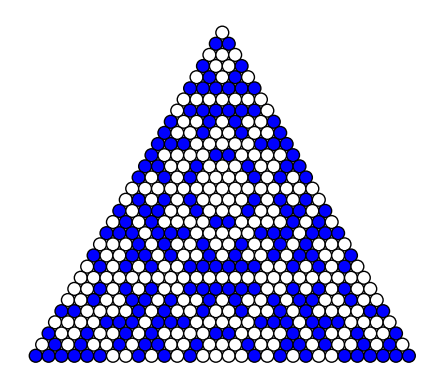}
    \includegraphics[width=0.19\linewidth]{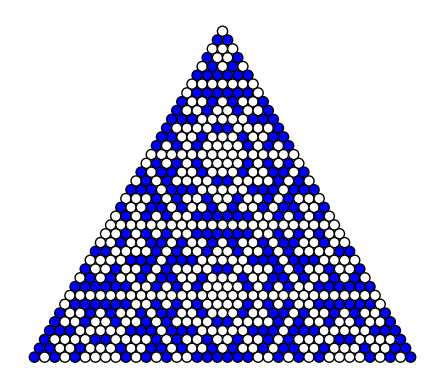}
    
    \includegraphics[width=0.19\linewidth]{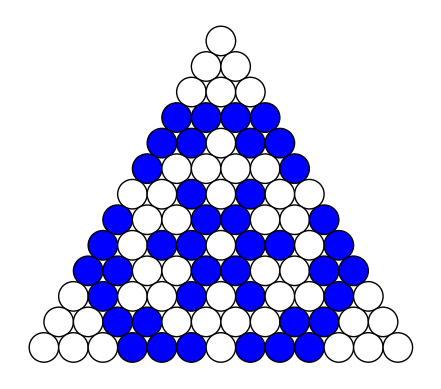}
    \includegraphics[width=0.19\linewidth]{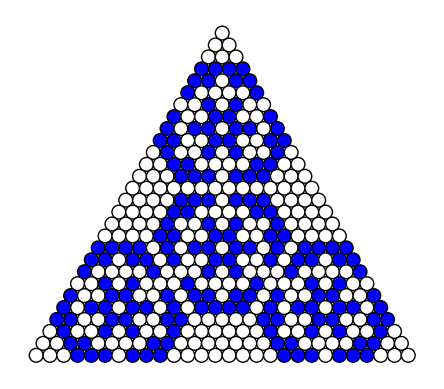}
    \includegraphics[width=0.19\linewidth]{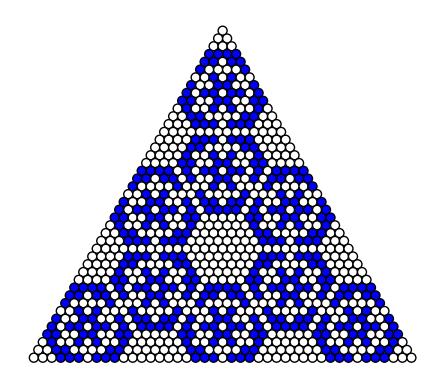}
    \includegraphics[width=0.19\linewidth]{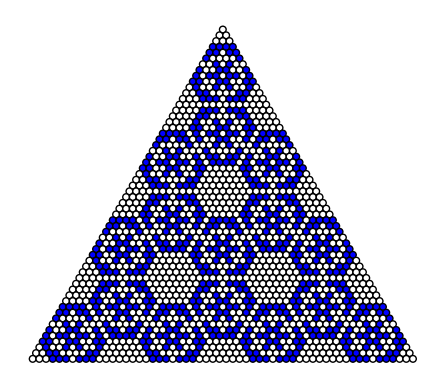}
    \includegraphics[width=0.19\linewidth]{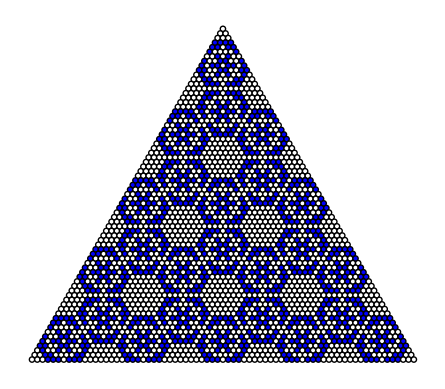}
    \caption{Further patterns.}
    \label{fig:padrao.2}
\end{figure}

We wish to understand the nature of this phenomenon—that is, to investigate how these patterns arise and how they propagate. The observation of the sequence presented in Figure~\ref{fig:padrao.1} points toward some structural regularity. In this figure, the games have sizes $2$, $6$, $10$, $14$, $18$, $22$, $26$, $30$, $34$ and $38$. These values suggest that games of size $4j+2$ present kernel elements exhibiting similar patterns. This is, in fact, the case.

Still in Figure~\ref{fig:padrao.1}, the mechanism of pattern propagation is not always evident, largely because most of the buttons do not appear colored. In contrast, in Figure~\ref{fig:padrao.2}, especially in the second and third rows, this mechanism becomes quite clear. The visible pattern is obtained by repeating kernel elements from smaller games and their symmetries, separated by a row of unused buttons. Figure~\ref{fig:propagacao.padrao} explicitly illustrates this process. The arrows and colors indicate which rotations and reflections are applied to the smaller patterns to compose the element observed in the larger games.

\begin{figure}[H]
    \centering
    \includegraphics[width=0.5\linewidth]{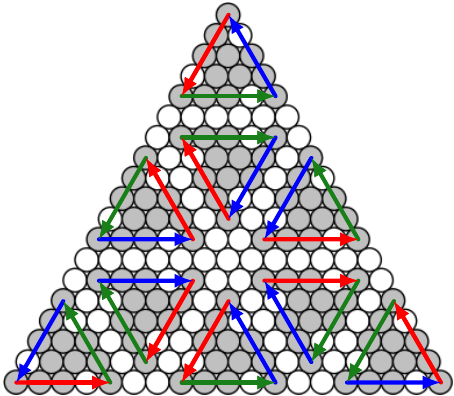}
    \caption{A mechanism of pattern propagation.}
    \label{fig:propagacao.padrao}
\end{figure}

These observations motivate the following result.

\begin{theorem}
If a game of size $n$ has a non-trivial kernel, then the same holds for all games of size $n+(n+2)j$, for every integer $j \geq 1$.
\end{theorem}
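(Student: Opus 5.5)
The plan is an explicit construction. Fix a non-empty null sequence $N$ for the game of size $n$ and put $m=n+(n+2)j$. I will build a non-empty null sequence for size $m$ by tiling the large board with copies of $N$, as suggested by Figure~\ref{fig:propagacao.padrao}. The copies are placed on small triangular regions of side $n$ and are separated by strips of buttons that are never pressed.

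\textbf{Geometry.} Use lattice coordinates $(a,b)$ with $a,b\ge 0$ and $a+b\le m-1$. Two buttons are neighbors when their difference is one of $\pm(1,0)$, $\pm(0,1)$, $\pm(1,-1)$. Draw lines from the three families $a=\text{const}$, $b=\text{const}$ and $a+b=\text{const}$, with period $n+2$ in each family. The cut positions should be chosen so that the complement of these lines splits into $(j+1)^2$ triangular regions of side $n$. Of these, $\binom{j+2}{2}$ point the same way as the board and $\binom{j+1}{2}$ are inverted. The length count $m=(j+1)n+2j$ shows that along each edge there are $j+1$ regions of side $n$ and $j$ gaps of width $2$ measured in that direction. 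A gap of width $2$ is exactly what makes an inverted triangle of side $n$ fit between two upright ones. So I first verify that the three families of lines cut the board into exactly these triangles, all of side $n$, and that every button outside the triangles lies on a line.

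\textbf{Placing the patterns.} The symmetry group of the triangle (rotations and reflections) maps null sequences of size $n$ to null sequences. Put $N$ on one upright region. On each adjacent region put the image of its neighbor's pattern under the reflection of the triangular lattice across the separating line; this is a lattice symmetry, and it maps an upright triangle of side $n$ onto the adjacent inverted one. Consistency around a cycle of regions needs checking. The plan is to show that the composition of reflections around any vertex where separator lines meet is the identity on patterns. The reflection group generated by the three families is a triangle group, and going around a vertex through six regions gives reflections whose product is trivial. Alternatively, one can label each region's pattern directly by an element of the dihedral group, according to its position.

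\textbf{Verification and main obstacle.} Let $X$ be the union of the placed patterns; it is non-empty. It remains to check that every button is toggled an even number of times, i.e.\ $\emptyset\xrightarrow{X}\emptyset$.
\begin{enumerate}
\item A button inside a region has neighbors only in that region or on separator lines, which are unpressed. Its count is therefore its count in the small game, which is even because the copy is a null sequence.
\item A button $p$ on a separator line, away from the meeting points, has pressed neighbors in exactly the two regions on either side of the line. The mirror symmetry pairs each pressed neighbor on one side with a pressed neighbor on the other side, so the count is even.
\end{enumerate}
The main obstacle is the buttons at or near the intersections of separator lines, and those on separators touching the big board's edge. There a button may see three or more regions, or a single region whose mirror partner lies off the board. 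Mirror pairing alone may fail in these cases. For them I would use the null-sequence property of $N$ itself. Treat the outer boundary via the observation that the toggles a virtual button just outside an edge of the size-$n$ game receives come from its two adjacent boundary buttons. If mirror pairing fails, I would try to show from $N$ being null that these boundary contributions satisfy the needed parity, using the last-row determination from the Remark. Failing that, I would choose the pattern on each region (among the dihedral images) so that the corner buttons receive symmetric contributions.
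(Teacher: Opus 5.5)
Your construction is the paper's. You place reflected copies of $N$ on triangles of side $n$, separated by lines of unpressed buttons. Interior buttons are handled by the null property and separator buttons by mirror pairing.

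However, the proposal stops exactly where the remaining work is. For separator buttons at or next to meeting points of lines, and next to the outer edge, you suggest mirror pairing may fail. You then list fallbacks that are not carried out, and the last two point the wrong way. The Remark on the last row says nothing about parities at separator buttons. Re-choosing dihedral images region by region would in general break the mirror relation between adjacent regions that your item 2 needs. So as written the verification is incomplete. The missing observation is that mirror pairing never fails, provided you use it globally rather than only between the two regions flanking a segment.

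Fix the mirrors as $a\equiv -1$, $b\equiv -1$, $a+b\equiv -2 \pmod{n+2}$. The base region is the set of lattice points of the chamber $C_0$ bounded by $a=-1$, $b=-1$, $a+b=n$. Since $m=(j+1)(n+2)-2$, the board is exactly the set of lattice points strictly inside the triangle bounded by the three mirrors $a=-1$, $b=-1$, $a+b=m$.

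The reflection in $a=A$ is $(a,b)\mapsto(2A-a,\,a+b-A)$, and the reflection in $a+b=S$ is $(a,b)\mapsto(S-b,\,S-a)$. These are lattice isometries that preserve the arrangement. They generate an affine reflection group $W$ acting simply transitively on the chambers. So put $w(N)$ on $w(C_0)$ and let $X_\infty=\bigcup_{w\in W}w(N)$; this also settles consistency without any cycle check. Then $s_H(X_\infty)=X_\infty$ for every mirror $H$, and no pressed button lies on a mirror.

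Now take any board button $p$ on a mirror $H$, wherever it is. The map $s_H$ fixes $p$, permutes its neighbors, and fixes no pressed button. Hence the pressed neighbors of $p$ split into pairs $\{q,s_H(q)\}$.

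The edge never breaks a pair. A neighbor of a board button that falls off the board has $a=-1$, $b=-1$ or $a+b=m$, so it lies on a mirror and is unpressed. Therefore, if $q$ is a pressed neighbor on the board, $s_H(q)$ is pressed and hence also on the board. The scenario ``a single region whose mirror partner lies off the board'' cannot occur. Touching three or more regions is harmless, because one mirror through $p$ suffices.

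Taking $X=X_\infty\cap(\text{board})$ completes the proof. This argument is the precise content behind the paper's one-line claim that dividing-line buttons see their pressed neighbors in symmetric pairs.
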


\begin{proof}
Let $T$ be an element of the kernel of a game of size $n$. By the symmetry of the triangular array, any reflection of $T$ along its heights and any rotation of $T$ also belongs to the kernel. For each integer $k\ge 1$, we will construct an element of the kernel of a game of size $n+(n+2)j$ from copies of $T$ and its symmetries.

The construction is illustrated in Figure~\ref{fig:propagacao.padrao}. In it, each triangular region marked by arrows corresponds to a block of size $n$ formed by $T$ or by one of its symmetries. These blocks are arranged so as to fill a larger triangulation and are separated by lines of width $1$ consisting entirely of unused buttons. Let $X$ be the set of buttons obtained by merging all these blocks, we will show that $X$ belongs to the kernel of the game of size $n+(n+2)j$.

 Consider an arbitrary button $b$. If $b$ belongs to the interior of one of the triangular blocks, then the only buttons in $X$
that affect $b$ are those belonging to the block itself. Since this block is a copy of $T$ or one of its symmetries, it is an element of the kernel of the game of size $n$. Thus, pressing the buttons in this block leaves the state of $b$ unchanged.

It remains to consider the buttons belonging to the dividing lines between the blocks. None of these buttons are contained in $X$. Furthermore, each of them has an even number of neighbors in $X$. This occurs because such buttons are on the boundary between two blocks that are congruent by reflection, which ensures that the neighbors present in $X$ appear in symmetric pairs around $b$. Since pressing two identical neighbors does not change the state of $b$, the final result for these buttons also coincides with their initial state.

We conclude that, for any button $b$, pressing the buttons in $X$ does not change the state of $b$. Therefore, $X$ belongs to the kernel of the game of size $n+(n+2)j$, which completes the proof.
\end{proof}

To conclude this section, we observe that there are many other mechanisms of pattern propagation. For games of size $2^{j}-3$, with $j \ge 3$, it is possible to construct a kernel element whose design resembles the fractal known as the Sierpinski Triangle. This case appears in Figure~\ref{fig:padrao.3}.

\begin{figure}[H]
    \centering
    \includegraphics[width=0.24\linewidth]{resources/triangulo_5_3.png}
    \includegraphics[width=0.24\linewidth]{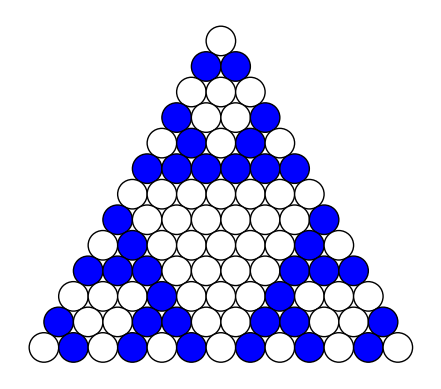}
    \includegraphics[width=0.24\linewidth]{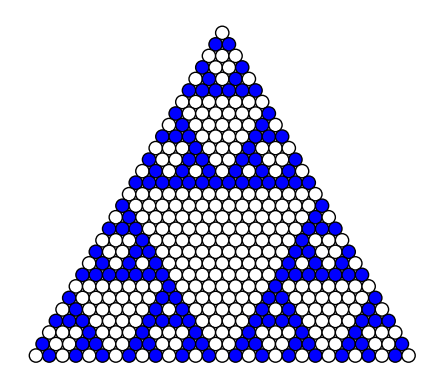}
    \includegraphics[width=0.24\linewidth]{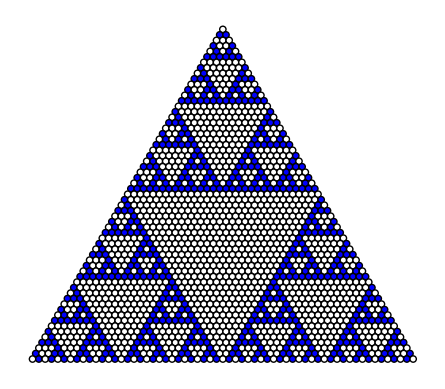}

    \caption{Kernel elements resembling the Sierpinski Triangle.}
    \label{fig:padrao.3}
\end{figure} 

Various other shapes can also be obtained. Among the symmetric figures, most of the visually striking images present patterns that resemble a kaleidoscope, such as the first two in Figure~\ref{fig:padrao.4}. There are also configurations with a less usual appearance, such as the third image in the same figure, whose arrangement suggests the shape of a spider. These examples show the surprising variety of patterns and illustrate the visual richness of the kernel elements.

\begin{figure}[H]
    \centering
    \includegraphics[width=0.32\linewidth]{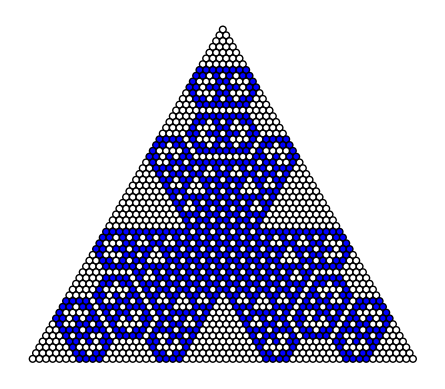}
    \includegraphics[width=0.32\linewidth]{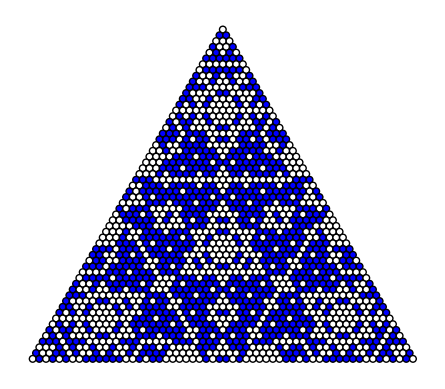}
    \includegraphics[width=0.32\linewidth]{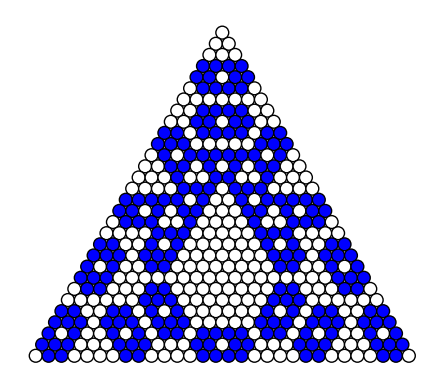}

    \caption{Kaleidoscope and Spider.}
    \label{fig:padrao.4}
\end{figure}

\section{Linear Algebra}
\label{sec:alg.lin}
In this section, we will describe the game using linear systems. In a game of size $n$, there are $\beta=\tfrac{n(n+1)}{2}$ buttons, and each of them can be either on or off. For reference purposes, we will number the buttons from $1$ to $\beta$.

Our goal is as follows. Given an initial configuration, that is, a choice of 'on' or 'off' states for each of the $\beta$  buttons, we wish to determine which buttons must be pressed so that, in the end, all of them are off. From the results in Section~\ref{sec:basics}, we know that the order of the presses does not affect the outcome. Therefore, what truly matters is only the set of buttons that will be pressed.

Let us now fix a button $j$. To understand how its final state will be determined, we analyze the effects that each button $i$ can exert on it. There are four possibilities, which we represent in the flowchart below. In each case, we indicate whether $i$ affects the state of $j$ or not, and whether button $i$ is pressed or not."

 \centerline{
\includegraphics[width = 150mm]{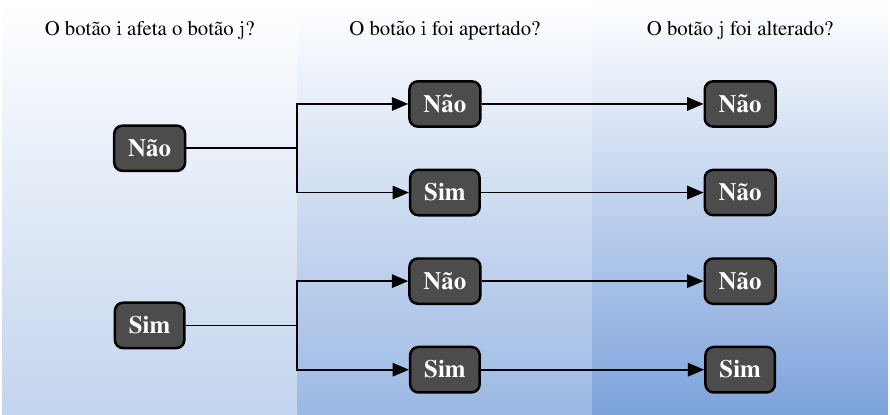}
}

Of the four situations described in the flowchart, we observe that the first three do not produce any change in the state of button $j$. Only the fourth situation, in which button $i$ affects $j$ and is indeed pressed, modifies its state. To translate these possibilities into algebraic language, we introduce the following notation.

We associate with each solution a collection of numbers

$$
x_{1}, x_{2},\dots, x_{\beta},
$$
where each $x_{i}$ can take only the values $0$ or $1$. We interpret $x_{i}=0$ as 'button $i$ was not pressed' and $x_{i}=1$ as 'button $i$ was pressed'. We also need to indicate which buttons influence the state of others. To do this, we define the coefficients $a_{ij}$ as follows:
$$
a_{ij}=
\begin{cases}
1, & \text{if pressing button $i$ alters the state of button $j$},\\[4pt]
0, & \text{otherwise}.
\end{cases}
$$

According to the flowchart, the state of button $j$ is altered by $i$ only when  $x_{i}=1$ and $a_{ij}=1$. In particular,

$$
a_{ij}x_{i}=
\begin{cases}
0, & \text{if pressing button $i$ does not alter the state of button $j$},\\[4pt]
1, & \text{if button $i$ alters the state of $j$ and was pressed}.
\end{cases}
$$

Thus, the sum
$$
a_{1j}x_{1}+a_{2j}x_{2}+\cdots+a_{\beta j}x_{\beta}
$$
counts exactly how many times the state of button $j$ was modified throughout the solution. Since each modification toggles the state from 'on' to 'off' or vice-versa, the value of this sum determines the final state of $j$: if the sum is even, the final state coincides with the initial one; if it is odd, the final state differs from the initial one.

We observe that, to determine the final state of $j$, only the parity of the sum matters
$$
a_{1j}x_{1}+a_{2j}x_{2}+\cdots+a_{\beta j}x_{\beta}.
$$

For this reason, from this point on we will work modulo $2$. In particular, every sum result will be $0$ (the even case) or $1$ (the odd case), such that $1+1=0$, $1+0=0+1=1$ and $0+0=0$. This arithmetic corresponds exactly to the behavior of the game: pressing a button is equivalent to adding $1$ to the current state, switching 'on' to 'off' and 'off' to 'on'.

Let $c_{j}$ be the initial state of button $j$, with $c_{j}=0$ for 'off' and $c_{j}=1$ for 'on'. The final state of $j$ is then given by
$$
a_{1j}x_{1}+a_{2j}x_{2}+\cdots+a_{\beta j}x_{\beta}+c_{j}.
$$
The vector $(x_{1},\dots,x_{\beta})$ solves the problem when all final states are equal to zero. Therefore,
$$
a_{1j}x_{1}+a_{2j}x_{2}+\cdots+a_{\beta j}x_{\beta}+b_{j}=0
\qquad \text{for all } j.
$$

By gathering these $\beta$ equations, we obtain the linear system
$$\left\{\begin{array}{c}
    a_{11}x_1 +   \cdots + a_{\beta 1}x_\beta + c_1 = 0 \\ 
    a_{12}x_1 +  \cdots + a_{\beta 2}x_\beta + c_2 =0\\
    \vdots \\
    a_{1\beta}x_1 +  \cdots + a_{\beta \beta }x_\beta + c_\beta = 0
\end{array}\right. \Longleftrightarrow \begin{bmatrix}
    a_{11} &  \cdots & a_{1\beta} \\
    a_{21} &  \cdots & a_{2\beta} \\
    \vdots &  \ddots & \vdots \\
    a_{\beta1} & \cdots & a_{\beta\beta}
\end{bmatrix} \cdot \begin{bmatrix}
    x_1 \\ \vdots \\ x_\beta
\end{bmatrix} + \begin{bmatrix}
    c_1  \\ \vdots \\ c_\beta
\end{bmatrix} = \begin{bmatrix}
    0 \\ \vdots \\ 0
\end{bmatrix}$$

In matrix notation, we write
\[
A=(a_{ij}), \quad 
x=(x_{1},\dots,x_{\beta}) \quad
\text{e}
\quad 
c=(c_{1},\dots,c_{\beta});
\] 
Thus, the system takes the compact form $Ax+c = 0$, which is equivalent (we are working modulo 2!) to $Ax = c$.

The fundamental question becomes: for which vectors $c$ does the system admit a solution? From linear algebra, we know that if the matrix $A$ is invertible, then there is a unique solution for every $c$. We thus obtain the following result.

\begin{theorem}
A game of size $n$ admits a unique solution for any initial configuration if, and only if, the matrix $A=(a_{ij})$ is invertible.
\end{theorem}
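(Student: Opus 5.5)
The plan is to reduce the statement to the standard fact that a square matrix over the field $\mathbb{Z}_2$ is invertible exactly when the linear map $x \mapsto Ax$ is bijective on $\mathbb{Z}_2^{\beta}$. First we record the dictionary between the game and the linear system. A set of buttons $X$ corresponds to the vector $x$ with $x_i = 1$ iff $i \in X$, and a configuration $C$ corresponds to $c$ in the same way. The computation above shows that $X$ is a solution for $C$ precisely when $Ax = c$ over $\mathbb{Z}_2$. Strictly, the $j$-th equation reads $\sum_i a_{ij}x_i = c_j$, so the matrix acting is the transpose of $(a_{ij})$. However, the neighbor relation is symmetric (button $i$ is tangent to $j$ iff $j$ is tangent to $i$, and each button affects itself), so $a_{ij} = a_{ji}$ and $A$ is symmetric. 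This removes any ambiguity between $A$ and $A^{T}$.

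($\Leftarrow$) Suppose $A$ is invertible over $\mathbb{Z}_2$. For any configuration $c$, the vector $x = A^{-1}c$ satisfies $Ax = c$, so a solution exists. If $Ax = c = Ay$, multiplying by $A^{-1}$ gives $x = y$, so the solution is unique.

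($\Rightarrow$) Suppose every configuration has a unique solution. Then the map $x \mapsto Ax$ from $\mathbb{Z}_2^{\beta}$ to itself is surjective (existence) and injective (uniqueness), hence bijective. To produce an inverse matrix, let $e_j$ be the configuration in which only button $j$ is lit, and let $b_j$ be its solution. Let $B$ be the matrix whose columns are $b_1,\dots,b_\beta$. Then $AB = I$ column by column. For $BA = I$, note that $A(BAx) = (AB)Ax = Ax$ for every $x$, and injectivity gives $BAx = x$ for all $x$, so $BA = I$. Alternatively, one can invoke the kernel language of Property~\ref{prop:nucleo.tem.k}: uniqueness means the kernel is $\{\emptyset\}$, i.e.\ $Ax = 0$ only for $x = 0$, and a square matrix with trivial null space over a field is invertible.

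The only point needing care is justifying that linear-algebra facts (inverse exists iff bijective, or iff the null space is trivial) hold over $\mathbb{Z}_2$ and not just over $\mathbb{R}$. This holds because $\mathbb{Z}_2$ is a field, and the explicit construction of $B$ above avoids determinants altogether. I therefore do not expect a genuine obstacle; the main work is making the translation between sets and vectors precise.
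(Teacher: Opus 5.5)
Your proof is correct and takes the same approach as the paper. The paper sets up the system $Ax=c$ over $\mathbb{Z}_2$ and then simply cites the standard linear-algebra fact. You additionally prove both directions explicitly, including the converse the paper leaves implicit. You also correctly note that the system as written involves the transpose of $(a_{ij})$, which is harmless because $A$ is symmetric.
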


Before proceeding, let us relate the language of this section to the language used in previous sections. If $C$ is an initial configuration and $X$ is a set of buttons that solves it, we write $C \xrightarrow{X} \emptyset$. To the configuration $C$, we associate the vector  $c=(c_{1},\dots,c_{\beta})$, and to the set of buttons $X$, we associate the vector $x=(x_{1},\dots,x_{\beta})$. As seen above, the condition for $X$ to turn off all the lights is $Ax+c=0$. Thus, the operation $C \xrightarrow{X} \emptyset$ is recorded by the equation $Ax + c = 0$. Both languages describe the same phenomenon.

More generally, an operation of the type $C \xrightarrow{X} D$ corresponds to the equality
$$
Ax + c = d,
$$
where $Ax$ represents the set of buttons whose state is altered by pressing $X$. By adding it to $c$, we obtain the final state $d$ associated with the configuration $D$.

Let us now consider a chain of operations,
$C \xrightarrow{X} E \xrightarrow{Y} D$. We can also write
$$
\begin{cases}
Ax + c = e,\\[2mm]
Ay + e = d.
\end{cases}
$$
Adding these two equalities modulo $2$, we obtain
$A(x+y) + c = d$.
Therefore, pressing the buttons in $X$ followed by those in $Y$ produces the same effect as pressing the set of buttons described by $x+y$. By arithmetic modulo $2$, the vector $x+y$ corresponds to the symmetric difference $X \triangle Y$. Thus,
$$
C \xrightarrow{X\triangle Y} D,
$$
as we already knew from the previous section. In algebraic terms, the map that associates each set of buttons $X$ with the vector $x$ is a group homomorphism (the group operations here are the symmetric difference of sets and the addition of vectors).

Let us look at an example. Figure~\ref{fig:3-antes} shows a state of a game of size $3$ along with a possible numbering of its six buttons. Suppose we press buttons $3$ and $4$. This alters the states of buttons $1$, $3$, $4$ and $6$, producing the configuration shown in Figure~\ref{fig:3-depois}.

Let us describe this same operation with matrices and vectors. For the adopted numbering, the matrix $A$ is

\begin{equation}
\label{eq.matriz.3}
A = \begin{bmatrix}
    1&1&1&0&0&0\\
    1&1&1&1&1&0\\
    1&1&1&0&1&1\\
    0&1&0&1&1&0\\
    0&1&1&1&1&0\\
    0&0&1&0&1&1
\end{bmatrix}.
\end{equation}

Pressing buttons $3$ and $4$ corresponds to the vector $x = (0,0,1,1,0,0)$. By calculating $Ax$, we obtain

$$
Ax =
\begin{bmatrix}
    1&1&1&0&0&0\\
    1&1&1&1&1&0\\
    1&1&1&0&1&1\\
    0&1&0&1&1&0\\
    0&1&1&1&1&0\\
    0&0&1&0&1&1
\end{bmatrix}
\!\cdot\!
\begin{bmatrix}
    0\\0\\1\\1\\0\\0
\end{bmatrix}
=
\begin{bmatrix}
    1\\0\\1\\1\\0\\1
\end{bmatrix}.
$$

This vector indicates exactly which buttons will have their states swapped: indices $1$, $3$, $4$ e $6$, as expected.

If the initial configuration is represented by the vector
$$
c = (0,1,0,0,0,1),
$$
then the final configuration is given by
$$
Ax + c = (1,1,1,1,0,0),
$$
which coincides with the situation illustrated in Figure~\ref{fig:3-depois}.

\begin{figure}[H]
\centering
\begin{minipage}{.45\textwidth}
  \centering
  \includegraphics[width=.65\linewidth]{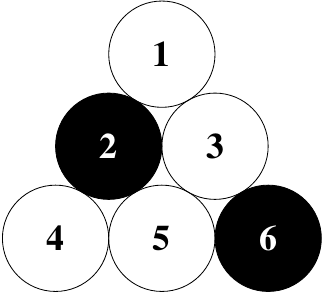}
  \caption{A configuration of a game of size $n=3$.}
  \label{fig:3-antes}
\end{minipage}%
\hspace{.09\textwidth}
\begin{minipage}{.45\textwidth}
  \centering
  \includegraphics[width=.65\linewidth]{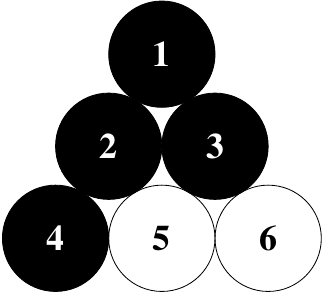}
  \caption{The configuration obtained by pressing buttons $3$ and $4$ in Figure~\ref{fig:3-antes}.}
  \label{fig:3-depois}
\end{minipage}
\end{figure}

Let us now analyze the kernel elements discussed in the previous sections. We recall that a set of buttons $X$ belongs to the kernel when, for any configuration $B$, it holds that
$$
C \xrightarrow{X} C,
$$
that is, pressing the buttons in $X$ produces no change at all. In the language of linear systems, this means
$$
Ax + c = c,
$$
or rather, $Ax=0$. Therefore, the elements of the kernel of the matrix $A$ correspond exactly to the elements of the kernel of the game (which justifies the terminology)

From linear algebra, we know that $A$ is invertible if, and only if, the homogeneous system $Ax=0$ has only the trivial solution. Furthermore, for any $c$, the system $Ax=c$ has the same number of solutions as the homogeneous system. Since we are working modulo $2$, the kernel is a vector subspace over the field $\mathbb Z_2$, and therefore has cardinality $2^\ell$ for some $\ell$. This is precisely Property~\ref{prop:nucleo.tem.k} presented in Section~\ref{sec:basics}. The algorithm employed to generate the figures in the previous section consists of calculating the kernel of the matrix $A$ corresponding to the game.

We now proceed to the explicit study of the kernel of $A$ for some values of $n$ (we will denote $A$ by $A_n$ for differentiation). 
To standardize, we will always consider the numbering of the buttons from top to bottom and from left to right, as in Figures~\ref{fig:antes} e \ref{fig:3-antes}. 

For $n=1$, there is only one button, and every configuration has a unique solution. The matrix is
$$
A_1 = \begin{bmatrix} 1 \end{bmatrix},
$$
which is invertible, and therefore
$
\textrm{Núcleo}(A_1) = \left\{ \begin{bmatrix} 0 \end{bmatrix} \right\}
$.

For $n=2$, there are three buttons, and we obtain
$$
A_2 = 
\begin{bmatrix}
    1 & 1 & 1\\
    1 & 1 & 1\\
    1 & 1 & 1
\end{bmatrix}.
$$
The kernel is
$$
\textrm{Núcleo}(A_2)=
\left\{
\begin{bmatrix} 0\\0\\0 \end{bmatrix},
\begin{bmatrix} 1\\1\\0 \end{bmatrix},
\begin{bmatrix} 0\\1\\1 \end{bmatrix},
\begin{bmatrix} 1\\0\\1 \end{bmatrix}
\right\},
$$
and its elements are illustrated in Figure~\ref{fig:nucleo.exemplo}. 

Since there are $2^3 = 8$ possible configurations and the kernel has $4$ elements, it follows from Property~\ref{prop:nucleo.tem.k} that each solvable configuration admits exactly $4$ solutions. Consequently, only two configurations are solvable, shown in Figure~\ref{fig:2-soluveis}. The others, represented in Figure~\ref{fig:2-insoluveis}, have no solution.

\begin{figure}[H]
\centering
\begin{minipage}{.25\textwidth}
  \centering
  \includegraphics[width=25mm]{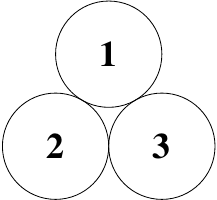}\vspace{1mm}
  \includegraphics[width=25mm]{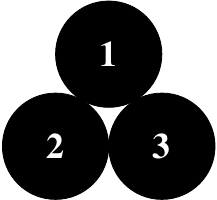}
  \caption{Solvable configurations of the game of size $n=2$.}
  \label{fig:2-soluveis}
\end{minipage}%
\hspace{.05\textwidth}
\begin{minipage}{.60\textwidth}
  \centering
  \includegraphics[width=25mm]{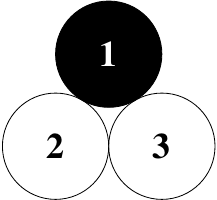}\hspace{3mm}
  \includegraphics[width=25mm]{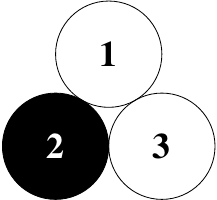}\hspace{3mm}
  \includegraphics[width=25mm]{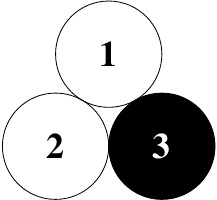}\vspace{1mm}
  \includegraphics[width=25mm]{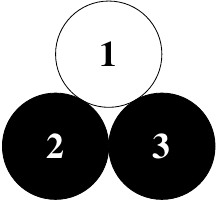}\hspace{3mm}
  \includegraphics[width=25mm]{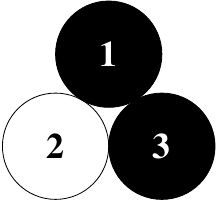}\hspace{3mm}
  \includegraphics[width=25mm]{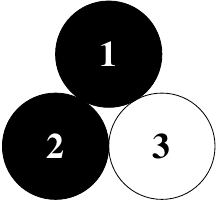}
  \caption{Unsolvable configurations of the game of size $n=2$.}
  \label{fig:2-insoluveis}
\end{minipage}
\end{figure}

For $n=3$, the matrix $A_3$ is the one presented in equation~\eqref{eq.matriz.3}. In this case, $A_3$ is invertible. The same occurs for $n=4$; that is, $A_4$ is also invertible. For $n=5$, the kernel of $A_5$ has exactly four elements, represented in Figure~\ref{fig:nucleo.5} from the previous section. For larger values of $n$, the calculations quickly become unwieldy. However, upon explicitly inspecting the matrices $A_1, \dots, A_5$, a pattern becomes visible: with the numbering we have adopted, each matrix $A_n$ is embedded as a block of $A_{n+1}$. This regularity is not accidental and can be verified directly.

To illustrate this phenomenon, we present below the matrix $A_6$, in which we highlight the blocks corresponding to $A_1, A_2, \dots, A_5$. We also highlight all non-zero entries.

% fonte: https://www.tablesgenerator.com/#

% Please add the following required packages to your document preamble:
% \usepackage[table,xcdraw]{xcolor}
% Beamer presentation requires \usepackage{colortbl} instead of \usepackage[table,xcdraw]{xcolor}
$$ A_6 = \left[
\begin{array}{lllllllllllllllllllll}
\multicolumn{1}{l|}{{\color[HTML]{3166FF} \textbf{1}}} & {\color[HTML]{3166FF} \textbf{1}} & \multicolumn{1}{l|}{{\color[HTML]{3166FF} \textbf{1}}} & 0 & 0 & \multicolumn{1}{l|}{0} & 0 & 0 & 0 & \multicolumn{1}{l|}{0} & 0 & 0 & 0 & 0 & \multicolumn{1}{l|}{0} & 0 & 0 & 0 & 0 & 0 & 0 \\ \cline{1-1}
{\color[HTML]{3166FF} \textbf{1}} & {\color[HTML]{3166FF} \textbf{1}} & \multicolumn{1}{l|}{{\color[HTML]{3166FF} \textbf{1}}} & {\color[HTML]{3166FF} \textbf{1}} & {\color[HTML]{3166FF} \textbf{1}} & \multicolumn{1}{l|}{0} & 0 & 0 & 0 & \multicolumn{1}{l|}{0} & 0 & 0 & 0 & 0 & \multicolumn{1}{l|}{0} & 0 & 0 & 0 & 0 & 0 & 0 \\
{\color[HTML]{3166FF} \textbf{1}} & {\color[HTML]{3166FF} \textbf{1}} & \multicolumn{1}{l|}{{\color[HTML]{3166FF} \textbf{1}}} & 0 & {\color[HTML]{3166FF} \textbf{1}} & \multicolumn{1}{l|}{{\color[HTML]{3166FF} \textbf{1}}} & 0 & 0 & 0 & \multicolumn{1}{l|}{0} & 0 & 0 & 0 & 0 & \multicolumn{1}{l|}{0} & 0 & 0 & 0 & 0 & 0 & 0 \\ \cline{1-3}
0 & {\color[HTML]{3166FF} \textbf{1}} & 0 & {\color[HTML]{3166FF} \textbf{1}} & {\color[HTML]{3166FF} \textbf{1}} & \multicolumn{1}{l|}{0} & {\color[HTML]{3166FF} \textbf{1}} & {\color[HTML]{3166FF} \textbf{1}} & 0 & \multicolumn{1}{l|}{0} & 0 & 0 & 0 & 0 & \multicolumn{1}{l|}{0} & 0 & 0 & 0 & 0 & 0 & 0 \\
0 & {\color[HTML]{3166FF} \textbf{1}} & {\color[HTML]{3166FF} \textbf{1}} & {\color[HTML]{3166FF} \textbf{1}} & {\color[HTML]{3166FF} \textbf{1}} & \multicolumn{1}{l|}{{\color[HTML]{3166FF} \textbf{1}}} & 0 & {\color[HTML]{3166FF} \textbf{1}} & {\color[HTML]{3166FF} \textbf{1}} & \multicolumn{1}{l|}{0} & 0 & 0 & 0 & 0 & \multicolumn{1}{l|}{0} & 0 & 0 & 0 & 0 & 0 & 0 \\
0 & 0 & {\color[HTML]{3166FF} \textbf{1}} & 0 & {\color[HTML]{3166FF} \textbf{1}} & \multicolumn{1}{l|}{{\color[HTML]{3166FF} \textbf{1}}} & 0 & 0 & {\color[HTML]{3166FF} \textbf{1}} & \multicolumn{1}{l|}{{\color[HTML]{3166FF} \textbf{1}}} & 0 & 0 & 0 & 0 & \multicolumn{1}{l|}{0} & 0 & 0 & 0 & 0 & 0 & 0 \\ \cline{1-6}
0 & 0 & 0 & {\color[HTML]{3166FF} \textbf{1}} & 0 & 0 & {\color[HTML]{3166FF} \textbf{1}} & {\color[HTML]{3166FF} \textbf{1}} & 0 & \multicolumn{1}{l|}{0} & {\color[HTML]{3166FF} \textbf{1}} & {\color[HTML]{3166FF} \textbf{1}} & 0 & 0 & \multicolumn{1}{l|}{0} & 0 & 0 & 0 & 0 & 0 & 0 \\
0 & 0 & 0 & {\color[HTML]{3166FF} \textbf{1}} & {\color[HTML]{3166FF} \textbf{1}} & 0 & {\color[HTML]{3166FF} \textbf{1}} & {\color[HTML]{3166FF} \textbf{1}} & {\color[HTML]{3166FF} \textbf{1}} & \multicolumn{1}{l|}{0} & 0 & {\color[HTML]{3166FF} \textbf{1}} & {\color[HTML]{3166FF} \textbf{1}} & 0 & \multicolumn{1}{l|}{0} & 0 & 0 & 0 & 0 & 0 & 0 \\
0 & 0 & 0 & 0 & {\color[HTML]{3166FF} \textbf{1}} & {\color[HTML]{3166FF} \textbf{1}} & 0 & {\color[HTML]{3166FF} \textbf{1}} & {\color[HTML]{3166FF} \textbf{1}} & \multicolumn{1}{l|}{{\color[HTML]{3166FF} \textbf{1}}} & 0 & 0 & {\color[HTML]{3166FF} \textbf{1}} & {\color[HTML]{3166FF} \textbf{1}} & \multicolumn{1}{l|}{0} & 0 & 0 & 0 & 0 & 0 & 0 \\
0 & 0 & 0 & 0 & 0 & {\color[HTML]{3166FF} \textbf{1}} & 0 & 0 & {\color[HTML]{3166FF} \textbf{1}} & \multicolumn{1}{l|}{{\color[HTML]{3166FF} \textbf{1}}} & 0 & 0 & 0 & {\color[HTML]{3166FF} \textbf{1}} & \multicolumn{1}{l|}{{\color[HTML]{3166FF} \textbf{1}}} & 0 & 0 & 0 & 0 & 0 & 0 \\ \cline{1-10}
0 & 0 & 0 & 0 & 0 & 0 & {\color[HTML]{3166FF} \textbf{1}} & 0 & 0 & 0 & {\color[HTML]{3166FF} \textbf{1}} & {\color[HTML]{3166FF} \textbf{1}} & 0 & 0 & \multicolumn{1}{l|}{0} & {\color[HTML]{3166FF} \textbf{1}} & {\color[HTML]{3166FF} \textbf{1}} & 0 & 0 & 0 & 0 \\
0 & 0 & 0 & 0 & 0 & 0 & {\color[HTML]{3166FF} \textbf{1}} & {\color[HTML]{3166FF} \textbf{1}} & 0 & 0 & {\color[HTML]{3166FF} \textbf{1}} & {\color[HTML]{3166FF} \textbf{1}} & {\color[HTML]{3166FF} \textbf{1}} & 0 & \multicolumn{1}{l|}{0} & 0 & {\color[HTML]{3166FF} \textbf{1}} & {\color[HTML]{3166FF} \textbf{1}} & 0 & 0 & 0 \\
0 & 0 & 0 & 0 & 0 & 0 & 0 & {\color[HTML]{3166FF} \textbf{1}} & {\color[HTML]{3166FF} \textbf{1}} & 0 & 0 & {\color[HTML]{3166FF} \textbf{1}} & {\color[HTML]{3166FF} \textbf{1}} & {\color[HTML]{3166FF} \textbf{1}} & \multicolumn{1}{l|}{0} & 0 & 0 & {\color[HTML]{3166FF} \textbf{1}} & {\color[HTML]{3166FF} \textbf{1}} & 0 & 0 \\
0 & 0 & 0 & 0 & 0 & 0 & 0 & 0 & {\color[HTML]{3166FF} \textbf{1}} & {\color[HTML]{3166FF} \textbf{1}} & 0 & 0 & {\color[HTML]{3166FF} \textbf{1}} & {\color[HTML]{3166FF} \textbf{1}} & \multicolumn{1}{l|}{{\color[HTML]{3166FF} \textbf{1}}} & 0 & 0 & 0 & {\color[HTML]{3166FF} \textbf{1}} & {\color[HTML]{3166FF} \textbf{1}} & 0 \\
0 & 0 & 0 & 0 & 0 & 0 & 0 & 0 & 0 & {\color[HTML]{3166FF} \textbf{1}} & 0 & 0 & 0 & {\color[HTML]{3166FF} \textbf{1}} & \multicolumn{1}{l|}{{\color[HTML]{3166FF} \textbf{1}}} & 0 & 0 & 0 & 0 & {\color[HTML]{3166FF} \textbf{1}} & {\color[HTML]{3166FF} \textbf{1}} \\ \cline{1-15}
0 & 0 & 0 & 0 & 0 & 0 & 0 & 0 & 0 & 0 & {\color[HTML]{3166FF} \textbf{1}} & 0 & 0 & 0 & 0 & {\color[HTML]{3166FF} \textbf{1}} & {\color[HTML]{3166FF} \textbf{1}} & 0 & 0 & 0 & 0 \\
0 & 0 & 0 & 0 & 0 & 0 & 0 & 0 & 0 & 0 & {\color[HTML]{3166FF} \textbf{1}} & {\color[HTML]{3166FF} \textbf{1}} & 0 & 0 & 0 & {\color[HTML]{3166FF} \textbf{1}} & {\color[HTML]{3166FF} \textbf{1}} & {\color[HTML]{3166FF} \textbf{1}} & 0 & 0 & 0 \\
0 & 0 & 0 & 0 & 0 & 0 & 0 & 0 & 0 & 0 & 0 & {\color[HTML]{3166FF} \textbf{1}} & {\color[HTML]{3166FF} \textbf{1}} & 0 & 0 & 0 & {\color[HTML]{3166FF} \textbf{1}} & {\color[HTML]{3166FF} \textbf{1}} & {\color[HTML]{3166FF} \textbf{1}} & 0 & 0 \\
0 & 0 & 0 & 0 & 0 & 0 & 0 & 0 & 0 & 0 & 0 & 0 & {\color[HTML]{3166FF} \textbf{1}} & {\color[HTML]{3166FF} \textbf{1}} & 0 & 0 & 0 & {\color[HTML]{3166FF} \textbf{1}} & {\color[HTML]{3166FF} \textbf{1}} & {\color[HTML]{3166FF} \textbf{1}} & 0 \\
0 & 0 & 0 & 0 & 0 & 0 & 0 & 0 & 0 & 0 & 0 & 0 & 0 & {\color[HTML]{3166FF} \textbf{1}} & {\color[HTML]{3166FF} \textbf{1}} & 0 & 0 & 0 & {\color[HTML]{3166FF} \textbf{1}} & {\color[HTML]{3166FF} \textbf{1}} & {\color[HTML]{3166FF} \textbf{1}} \\
0 & 0 & 0 & 0 & 0 & 0 & 0 & 0 & 0 & 0 & 0 & 0 & 0 & 0 & {\color[HTML]{3166FF} \textbf{1}} & 0 & 0 & 0 & 0 & {\color[HTML]{3166FF} \textbf{1}} & {\color[HTML]{3166FF} \textbf{1}}
\end{array} \right] $$

We conclude this section with a result whose proof combines linear algebra tools with a classic combinatorics problem. A similar formulation for the game in a square arrangement is presented in~\cite{jaap:site}. It was from this source that we became aware of the result, whose simplicity and elegance surprised us and motivated its inclusion here in the corresponding version for the triangular arrangement.

Before stating the theorem, we clarify what we mean by a covering of the board by $1\times 1$ and $2\times 1$
 pieces. This refers to a partition of the set $\{1,2,\dots,\beta\}$, that is, the set of the game's buttons, into subsets of one or two elements. When a subset has two elements, these must correspond to neighboring buttons. For example, in the case $n=4$, Figure~\ref{fig:recobrimento} illustrates a possible covering, which corresponds to the partition $\{1,2\},\{3\},\{4\},\{5,9\},\{6,10\},\{7,8\}$ of $\{1,2,\dots,10\}$.

\begin{figure}[H]
    \centering
    \includegraphics[width=0.25\linewidth]{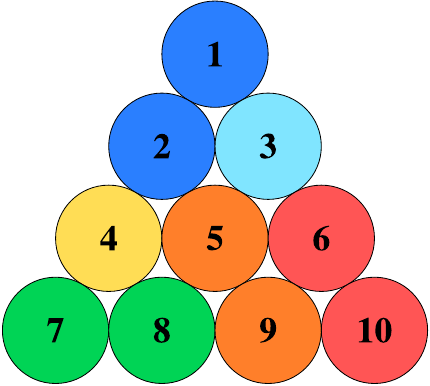}
    \caption{A way to cover the board of a game of size $n=4$ with pieces $1\times 1$ e $2 \times 1$.}
    \label{fig:recobrimento}
\end{figure}

\begin{theorem}
A game of size $n$ has a solution for every configuration if, and only if, the number of coverings of the triangular board by $1\times 1$ and $2\times 2$ pieces is odd.
\end{theorem}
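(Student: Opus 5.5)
By the earlier theorem of this section, a game of size $n$ has a solution for every configuration exactly when $A=A_n$ is invertible over $\mathbb Z_2$, that is, when $\det A \equiv 1 \pmod 2$. (The statement's ``$2\times 2$'' is evidently a typo for the $2\times 1$ pieces defined just before it.) So the plan is to compute $\det A$ modulo $2$ combinatorially and show that it equals, modulo $2$, the number of coverings by $1\times1$ and $2\times1$ pieces.

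\textbf{Reduction to permutations.} Over $\mathbb Z_2$ signs are irrelevant, so
$$\det A \equiv \sum_{\sigma\in S_\beta}\prod_{i=1}^{\beta} a_{i\sigma(i)} \pmod 2,$$
which is the permanent of $A$ reduced mod $2$. Since $a_{ij}=1$ exactly when $i=j$ or $i,j$ are neighbors, a permutation contributes $1$ if and only if it sends every button to itself or to a neighbor. Call such permutations \emph{admissible}. Then $\det A$ is congruent to the number of admissible permutations. Note that $A$ is symmetric, because the neighbor relation is symmetric, so the inverse of an admissible permutation is again admissible.

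\textbf{Pairing by inversion.} Decompose each admissible $\sigma$ into disjoint cycles. Every cycle of length at least $3$ is a closed walk in the neighbor graph. Fix a canonical rule: let $C$ be the cycle of length $\ge 3$ containing the smallest-numbered button among all such cycles. Map $\sigma$ to the permutation $\sigma'$ obtained by reversing the orientation of $C$ only.
\begin{itemize}
\item $\sigma'$ is admissible, because reversing a cycle of length $\ge3$ still sends each button to a neighbor.
\item $\sigma'\neq\sigma$, because a cycle of length $\ge 3$ differs from its reverse.
\item $\sigma'$ has the same set of long cycles as $\sigma$, so the same $C$ is selected and $(\sigma')'=\sigma$.
\end{itemize}
Hence this map is a fixed-point-free involution on the admissible permutations having at least one long cycle. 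Those permutations therefore come in pairs and contribute $0$ modulo $2$.

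\textbf{Counting what remains.} The surviving permutations are the admissible involutions, whose cycles are all fixed points or transpositions of neighboring buttons. These correspond bijectively to partitions of $\{1,\dots,\beta\}$ into singletons and neighboring pairs, that is, to coverings by $1\times1$ and $2\times1$ pieces. Therefore $\det A$ is congruent modulo $2$ to the number of coverings, and combining this with the invertibility criterion proves the theorem.

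\textbf{Main obstacle.} The delicate point is making the involution well defined. The choice of which cycle to reverse must be unaffected by the reversal itself. Choosing the cycle by its minimal element, rather than by any orientation-dependent feature, guarantees this. The rest is routine bookkeeping.
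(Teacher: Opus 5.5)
Your proof is correct and follows the paper's route: expand $\det A$ by Leibniz modulo $2$, keep only the permutations sending each button to itself or a neighbor, cancel in pairs those having a cycle of length at least $3$, and identify the surviving involutions with coverings by $1\times1$ and $2\times1$ pieces. The only difference is the pairing. The paper simply pairs $\sigma$ with $\sigma^{-1}$, which reverses all cycles at once and whose fixed points are exactly the involutions, so the canonical-cycle choice you flag as the main obstacle is not actually needed.
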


\begin{proof}
We know that a game of size $n$ admits a solution for every configuration if, and only if, the matrix $A$ is invertible. Since we are working modulo $2$, this occurs exactly when $\det A=1$.

  By Leibniz's formula,
    $$
        \det A
        = \sum_{\sigma\in S_\beta}
        a_{1\sigma(1)}\, a_{2\sigma(2)} \cdots a_{\beta\sigma(\beta)},
    $$
where $S_\beta$ is the group of permutations of $\{1,2,\dots,\beta\}$. Modulo $2$, the sign of the permutation is not relevant. Each term of the sum is either $0$ or $1$, and it is equal to $1$ exactly when
    $$
        a_{1\sigma(1)} = a_{2\sigma(2)} = \cdots = a_{\beta\sigma(\beta)} = 1.
    $$

We recall that $a_{ij}=1$ only when $i=j$ or when buttons $i$ and $j$ are neighbors. Thus, let $V$ be the set of permutations $\sigma$ such that, for every $j$, the index $\sigma(j)$ coincides with $j$ or with a neighbor of $j$. For these permutations, the corresponding term in the sum of the determinant is equal to $1$; for the others, it is equal to $0$. Therefore,
    $$
        \det A = \sum_{\sigma\in V} 1.
    $$

We now observe that if $\sigma\in V$, then $\sigma^{-1}\in V$, since neighborhood is a symmetric relation. Furthermore, if $\sigma\neq\sigma^{-1}$, the two corresponding terms in the sum cancel each other out modulo $2$. Therefore, only involutive permutations effectively contribute to the final value of the determinant. Denoting by $W$ the set of permutations $\sigma\in V$ such that $\sigma = \sigma^{-1}$, we have:
    $$
        \det A = \sum_{\sigma\in W} 1.
    $$

Each permutation $\sigma\in W$ is an involution without multiple fixed points; therefore, its cycle decomposition consists solely of cycles of length $1$ and $2$. Furthermore, since $\sigma\in V$, whenever a transposed cycle $(i\, \sigma(i))$ occurs, the indices $i$ and $\sigma(i)$ correspond to neighboring buttons. Thus, each $\sigma\in W$ determines a covering of the board by $1\times 1$ pieces (the cycles of length $1$) and $2\times 1$ pieces (the cycles of length $2$). Conversely, each covering defines an involutive permutation in $W$.
   
Therefore, the number of elements in $W$ coincides with the number of coverings of the board. Since
    $$
        \det A = 1
        \quad\Longleftrightarrow\quad
        |W| \text{ is odd},
    $$
   it follows that the game has a solution for every configuration exactly when the number of coverings is odd, as desired..
\end{proof}

%\section{Conclusões}
%{\color{blue} Vale a pena por algo aqui? Problemas abertos?} \\
%- encontrar um critério pra quando tem solução. \\
%- encontrar padrões na matriz parece ser difícil \\
%- generalização pra grafos. o problema tem efeito global. Mexendo localmente no grafo altera globalmente e só Deus sabe o que acontece.

% agradecimentos
\section*{Acknowledgements}

%{\att É certo agradecer a sedu e fapes aqui?}
We would like to thank the entire team involved in the organization of OCMAT, especially professors Fábio Júlio Valentim, Florêncio Guimarães, Luzia Casati, Moacir Rosado, and Valmecir Bayer, members of the academic team, for the discussions, encouragement, and contributions throughout the entire process. We also thank FAPES and SEDU-ES for the institutional support in organizing OCMAT 2025.

%%%%%%%%%%%%%%%%%%%%%%%%%%%%%%%%%%%%%%%%%%%
% Referências Bibliográficas
%------------------------------------------

% Manter parâmetro do ambiente thebibliography para calcular
% largura do rótulo do item (usualmente "99") como vazio.

\end{document}